\documentclass[12pt,a4paper,reqno]{amsart}
\usepackage{amsmath,amssymb,amsthm,amsfonts}
\usepackage[export]{adjustbox}
\usepackage{amsbsy}
\usepackage[utf8]{inputenc}
\usepackage[normalem]{ulem}
\usepackage{xspace}
\usepackage{cancel}
\usepackage[greek,english]{babel}
\usepackage{url}
\usepackage{wrapfig}
\usepackage{enumitem}
\usepackage{multicol}
\usepackage{moreenum}
\usepackage{xcolor}
\usepackage[pagewise]{lineno}

\usepackage{xcolor}
\newcounter{dummy}
\makeatletter
\newcommand\myitem[1][]{\item[#1]\refstepcounter{dummy}\def\@currentlabel{#1}}
\makeatother
\usepackage{subcaption}
\usepackage{tikz}
\usepackage{float}
\usepackage[bottom=2.5cm,top=2.5cm, left=2.5cm, right=2.5cm]{geometry}

\usepackage{multirow}
\usepackage{array,multirow}

\definecolor{LinkColor}{rgb}{0,0,0} 
\usepackage[colorlinks=true,linkcolor=LinkColor,citecolor=LinkColor,urlcolor= LinkColor, naturalnames, hyperindex, pdfstartview=FitH, bookmarksnumbered, plainpages]{hyperref}
\usepackage{cleveref}

\makeatletter
\newcommand{\slunlhd}{%
	\mathrel{\mathpalette\sl@unlhd\relax}%
}

\newcommand{\sl@unlhd}[2]{%
	\sbox\z@{$#1\lhd$}%
	\sbox\tw@{$#1\leqslant$}%
	\dimen@=\ht\tw@
	\advance\dimen@-\ht\z@
	\ifx#1\displaystyle
	\advance\dimen@ .2pt
	\else
	\ifx#1\textstyle
	\advance\dimen@ .2pt
	\fi
	\fi
	\ooalign{\raisebox{\dimen@}{$\m@th#1\lhd$}\cr$\m@th#1\leqslant$\cr}%
}
\makeatother

\newtheorem{theorem}{Theorem}[section]
\newtheorem{corollary}[theorem]{Corollary}

\newtheorem{proposition}[theorem]{Proposition}

\theoremstyle{definition}
\newtheorem{definition}[theorem]{Definition}
\newtheorem{remark}[theorem]{Remark}

\newtheorem{example}[theorem]{Example}

\newcommand{\ecut}{\textsf{ecut}\xspace}
\newcommand{\cut}{\textsf{cut}\xspace}

\newcommand{\Irr}{\operatorname{Irr}}

\newcommand{\U}{\textup{U}}

\newcommand{\ZZ}{\mathbb{Z}}

\newcommand{\Q}{{\mathbb Q}}
\newcommand{\R}{{\mathbb R}}

\setlength\parindent{10pt}


\definecolor{wildstrawberry}{rgb}{1.0, 0.26, 0.64}

\newcommand{\GEN}[1]{\langle #1 \rangle}

\title{Extended \texttt{cut} groups}

\author[\`{A}. Garc\'{i}a-Bl\`{a}zquez]{\`{A}ngel Garc\'{i}a-Bl\`{a}zquez}
\address{\`{A}ngel Garc\'{i}a-Bl\`{a}zquez}
\email{\href{mailto:angel.garcia11@um.es}{angel.garcia11@um.es}}

\author[G. Kaur]{Gurleen Kaur}
\address{(Gurleen Kaur) Indian Institute of Technology Ropar, Punjab 140001, India}
\email{\href{mailto:gurleenkaur992gk@gmail.com}{gurleenkaur992gk@gmail.com }}

\author[S. Maheshwary]{Sugandha Maheshwary}
\address{(Sugandha Maheshwary) Indian Institute of Technology Roorkee, Haridwar, PIN 247667, Uttarakhand, India.}
\email{\href{mailto:msugandha@ma.iitr.ac.in}{msugandha@ma.iitr.ac.in}}

\thanks{The second author acknowledges the research support of the Department of Science and Technology (INSPIRE Faculty No. DST/INSPIRE/04/2023/001200), Govt. of India.The third author gratefully acknowledges the support by Science \& Engineering Research Board (SERB),  Department of Science and Technology (DST), India (SRG/2023/000180).}

\keywords{\cut groups, integral group ring, central units}

\subjclass[2010]{16S34, 16U60, 20C05, 20C15}
\begin{document}
	
	\maketitle

	\begin{abstract} A finite group $G$ is said to be a \cut group if  
		all \textbf{c}entral \textbf{u}nits in the integral group ring $\mathbb{Z}G$ are \textbf{t}rivial.
		In this article, we extend the notion of \cut groups, by introducing extended \cut groups. We study the properties of extended \cut groups analogous to those known for \cut groups and also characterise some substantial classes of groups having the property of being extended \cut. A complete classification of extended \cut split metacyclic groups has been presented.
		
	\end{abstract}

	\section{Introduction}
	Let $G$ be a finite group and let $\mathcal{Z}(\U(\mathbb{Z}G))$ denote the centre of the unit group $\U(\mathbb{Z}G)$ of the integral group ring $\mathbb{Z}G$. The work on understanding the $\U(\mathbb{Z}G)$ and $\mathcal{Z}(\U(\mathbb{Z}G))$ is immense. Higman's work of 1940 \cite{Hig40} seemingly has been an initial masterpiece. Since then, several problems in this direction have been raised and attempted and the subject fascinates till date. 
		A glimpse on the current state of art can be found in \cite{MP18} and \cite{Jes21}.

	It is a well known fact that  $\mathcal{Z}(\U(\mathbb{Z}G))=\pm \mathcal{Z}(G)\times A_G$, where the torsion part $\pm \mathcal{Z}(G)$ contains the trivial central units of $\ZZ G$ (scalar multiples of central group elements), and the non-torsion central units of $\mathbb{Z}G$ are contained in $A_G$, which is a free abelian group of finite rank, say $\rho(G)$ (see \cite{JdR16}, Section 7.1 for details). This leads to a natural interest of determining $\rho(G)$, for a given group $G$. For an abelian group $G$, $\rho(G)$ was given by Ayoub and Ayoub \cite{AA69}. In independent works, Ritter and Sehgal \cite{RS05} and Ferraz \cite{Fer04} determined $\rho(G)$  in terms of conjugacy classes of $G$. Particularly, for split metacyclic groups, $\rho(G)$ has been computed in \cite{FS08}. For the so-called strongly monomial groups, $\rho(G)$ in terms  of special pairs of subgroups, called strong Shoda pairs of $G$,
	 was given by Jespers et al. \cite{JOdRG13}. This was further generalized to even bigger class, namely generalized strongly monomial groups by Bakshi and Kaur \cite{BK22}. It may be noted that the class of generalized strongly monomial groups is quite dense in the sense that 
	   it seems to be a challenging task to find an example of a monomial group which is not generalized strongly monomial \cite{Kau23}.  
	
	The groups $G$ for which $\rho(G)=0$ are particularly of special interest. For, $\rho(G)=0$ is equivalent to saying that all \textbf{c}entral \textbf{u}nits in the integral group ring are \textbf{t}rivial (equivalently, torsion). Such a group $G$ is said to be a \cut group, as coined by Bakshi et al \cite{BMP17}.  Interestingly, though the term \cut is coined only in recent years, such groups under different names and perspectives have been of interest for a long time. To the best of our knowledge, the study of such groups dates back to 1970’s (c.f.\cite{BBM20}) as observed in the work of Patay (a PhD student of A. Bovdi) who studied when  a finite simple group is a \cut-group and answered it for alternating groups \cite{Pat75,Pat78}. 
	The prominent appearances of questions on \cut groups and initial developments were then seen in \cite{GP86}, \cite{Bov87} and \cite{RS90}. The study of \cut groups involves a rich interplay of group theory, module theory, representation theory, algebraic number theory and K-theory. For details on properties of \cut groups, we refer Section 3 of \cite{MP18}. The class of \cut groups is of great interest, as can be observed from some recent works \cite{BMP17, Mah18,Bac18,Bac19,BMP19,Tre19,Gri20,BBM20,BCJM21,Mor22,BKMdR23}.
	
	In this article, we extend the notion of \cut groups, by introducing extended \cut groups, defined as the groups $G$ with $\rho(G)\leq 1$. We call extended \cut groups as \ecut groups for brevity. Indeed, \cut groups are \ecut groups. It may be pointed out that for any $n\in\mathbb{N}$, symmetric group $S_n$ is a \cut group and hence an \ecut group. The classification of the alternating groups $A_n$  with $\rho(A_n) \leq 1$ is known, i.e., $n$ for which $A_n$ is \cut or \ecut are known \cite{AKS08}. Also, finite simple \ecut groups have been listed by Bovdi et al. \cite{BBM20}. These works also motivated the study of groups $G$ with $\rho(G)$ at most $1$. Note that for an \ecut group $G$, the normalized group of central units of the integral group ring $\mathbb{Z}G$ is an infinite cyclic group, and one may like to write its generator (non-torsion unit). Such works appear in \cite{Ale94}, for some cyclic and alternating \ecut groups. For an \ecut group $G$, one may compute this generator using several other techniques known (\cite{JPS96,LP97,FS08,JP12,JOdRG13,JdRV14,BK19}).

	The aim of this article is to study finite \ecut groups and we focus on solvable groups. We set up notation and preliminaries in Section 2, and therein also recall that the class of \cut groups is precisely the class of inverse semi-rational groups, which is a generalization of well studied class of rational groups. In Section 3,
	we discuss the criteria for a group to be \ecut and classify finite abelian \ecut groups. This is followed by characterising finite nilpotent \ecut groups in Section $4$. Further, we study monomial \ecut groups in Section 5, where we provide ample examples of \ecut groups. In particular, we give a complete classification for split metacyclic \ecut groups. We conclude the article by discussing the prime spectrum of a solvable \ecut group, in Section 6.	

	\section{Notation and preliminaries}
	 
	 For a group $G$ and for $g\in G$, $|g| $ and $|G|$ denote the order of $g$ and $G$ respectively. If $g,h\in G$ and $g$ is conjugate (not conjugate) to $h$ in $G$, then we write $g\sim h$ ($g\nsim h$). Let $C_g$ denote the conjugacy class of an element $g$ in $G$ and let $e$ denote the exponent of $G$. The $\R$-class of $g$, denoted by $C_{g}^{\mathbb{R}}$ equals $C_g\cup C_{g^{-1}},$ and the $\Q$-class of $g$, denoted by $C_{g}^{\mathbb{Q}}$, is 
	  $\cup_{i}C_{g^{i}}$, where $i \in \{1\leq i \leq e: \gcd(i,e)=1\}$. We use $H\leq G$ ($H\unlhd G$) to state that $H$ is a subgroup (normal subgroup) of $G$.
	 
	 In the study of the \cut and \ecut groups, often some classes of groups come into picture, which are defined by their character values, or the conjugacy relations satisfied by their elements. These classes are closely related and one switches from one form to the other quite frequently. For the convenience of the reader, we first put together the relevant terms explicitly and concisely.\\
	 
	 Let $\Irr(G)$ denote the set of all irreducible complex characters of $G$. For $\chi \in \Irr(G)$, and for $x\in G$, set
	 \begin{quote}
	 	$\mathbb{Q}(\chi):=\mathbb{Q}({\chi(g)\,|\,g\in G})$;\\
	 $\mathbb{Q}(x):=\mathbb{Q}({\chi(x)\,|\,\chi\in \Irr(G)}).$
	 \end{quote}
	 
	 \begin{definition}(Real element, real valued character and real group) An element $x\in G$ is said to be real, if $x\sim x^{-1}$. Equivalently, $C_x=C_x^{\mathbb{R}}$. Also, equivalent to saying that  $\mathbb{Q}(x)\subseteq \mathbb{R}.$ A character $\chi\in \Irr(G)$ is said to be real valued if $\mathbb{Q}(\chi)\subseteq \mathbb{R}.$ A group $G$ is said to be real if every element of $G$ is real, or equivalently, if every irreducible character of $G$ is real valued. 
	 	
	 \end{definition}
	 It is well known that the number of real valued irreducible complex characters of $G$ is equal to the number of conjugacy classes of $G$, consisting of real elements (see for instance, \cite{Isa06}(6.13), p.\,96).

	 \begin{definition}(Rational element, rational valued character and rational group)
	 	An element $x\in G$ is said to be rational, if for every $j$, such that $\gcd(j,|x|)=1$, we have that $x^j\sim x$. Equivalently, $C_x=C_x^{\mathbb{Q}}$. This is also equivalent to saying that  $\mathbb{Q}(x)= \mathbb{Q}$ (\cite{Hup67}, Theorem 13.7, p.\,537). A character $\chi\in \Irr(G)$ is said to be rational valued (or simply rational) if $\mathbb{Q}(\chi)= \mathbb{Q}.$ A group $G$ is said to be rational if every element of $G$ is rational, or equivalently, if every irreducible character of $G$ is rational.

	 \end{definition}
 	Observe that rational characters of a group correspond to the rational rows in the character table, while the rational classes correspond to the rational columns.
	 In general, the number of rational irreducible characters of a group $G$ is not
	 same as the number of conjugacy classes of $G$, consisting of rational elements (see e.g. \cite{BCJM21}, Example 3.3). 
	 
	 \begin{definition}(Inverse semi-rational element and group)
	 	An element $x\in G$ is said to be inverse semi-rational, if for every $j$, such that $\gcd(j,|x|)=1$, we have that $x^j\sim x$ or $x^j\sim x^{-1}$. Equivalently, $C_x^{\mathbb{Q}}=C_x^{\mathbb{R}}$. A group $G$ is said to be inverse semi-rational if every element of $G$ is inverse semi-rational.
	 	
	 \end{definition}
	 In view of (\cite{BCJM21}, Propositions 2.1 and 3.2), the following are equivalent definitions for an inverse semi-rational group $G$.
	 
	 \begin{enumerate}
	 	\item[(i)] $G$ is a \cut group.
	 	\item[(ii)] $\mathbb{Q}(x)$ is either $\mathbb{Q}$ or imaginary quadratic for every $x\in G$.
	 	\item[(iii)] $\mathbb{Q}(\chi)$ is either $\mathbb{Q}$ or imaginary quadratic for every $\chi\in \Irr(G)$.\\
	 	
	 \end{enumerate}

	 Further, it is known that for an inverse semi-rational group $G$,  the number of rational irreducible characters of $G$ is equal to the number of conjugacy classes of $G$, consisting of rational elements (\cite{BCJM21}, Theorem 3.1).
	 
	 \begin{definition}(Semi-rational element and group)
	 	An element $x\in G$ is said to be semi-rational, if for every $j$, such that $\gcd(j,|x|)=1$, we have that $x^j\sim x$ or $x^j\sim x^{m}$, for some $m$. A group $G$ is said to be semi-rational if every element of $G$ is semi-rational.
	 	
	 \end{definition}
	 
	 \noindent Clearly, for $x\in G$, 
	 
	 \begin{quote}
	 	$x$ is rational $\implies~x$ is inverse semi-rational $\implies~x$ is semi-rational;
	 	 \end{quote}
 	 and 
	 		 \begin{quote}
	 	$x$ is inverse semi-rational as well as real $\implies~x$ is rational.\\
	 \end{quote}
	
	  It is known that if $|G|$ is odd, then $G$ is semi-rational group if and only if $G$ is inverse semi rational (\cite{CD10}, Remark 13).

	 \begin{definition}(Quadratic character, quadratic rational group)
	 	A character $\chi$ of $G$ is quadratic, if the degree of $\mathbb{Q}(\chi)$ over $\mathbb{Q}$ is $2$, and a group $G$ is called quadratic rational if every $\chi \in \Irr(G)$, $\mathbb{Q}(\chi)$ is either rational or quadratic. 
	 \end{definition}
	 
	 It is known that if $G$ is an odd order group, then $G$ is semi-rational group if and only if $G$ is quadratic rational. However, it is not true in general (\cite{Ten12}, Section 6).

		\section{\cut groups and \ecut groups} 
		
	Throughout the article, groups are always assumed to be finite, and characters are complex. 
		
		Let $G$ be a group. As mentioned in the introduction, $\rho(G)$ denotes the rank of free abelian part of $\mathcal{Z}(U(\mathbb{Z}G))$. 
		If $\rho(G)=0$ for a finite group $G$, then $G$ is a cut group, and  if $\rho(G)\leq 1$, we say that $G$ is an \ecut group.

		The class of \cut groups is extensively studied.  In the study, various equivalent criteria for a group $G$ to be a \cut group have been developed. An important role is played by a group theoretic criterion for $G$ to be a \cut group, which is as follows:

		\begin{theorem}[\cite{DMS05}, Theorem 1 and Lemma 2, see also \cite{MP18}, Theorem 5] The following are equivalent for a finite group $G$:
			
			\begin{enumerate}
				\item[(i)] $G$ is a \cut group.
				\item[(ii)] for every $x\in G$ and for every  $j\in \mathbb{N}$ with $gcd(j, |G|) = 1$, either $x^j$$\sim$ $x$ or $x^j$$\sim $$x^{-1}$.
				\item[(iii)] for every $x\in G$ and for every  $j\in \mathbb{N}$ with $gcd(j, |x|) = 1$, either $x^j$$\sim$ $x$ or $x^j$$\sim $$x^{-1}$, i.e., $x$ is an \textit{inverse semi-rational element} of $G$. 
			\end{enumerate}

\end{theorem}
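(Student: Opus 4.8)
The statement asserts the mutual equivalence of (i), (ii), (iii); I would obtain it by proving $\textup{(ii)}\Leftrightarrow\textup{(iii)}$, which is elementary, together with $\textup{(i)}\Leftrightarrow\textup{(iii)}$, which carries the content. For $\textup{(ii)}\Leftrightarrow\textup{(iii)}$: since $|x|\mid|G|$, any $j$ with $\gcd(j,|G|)=1$ also satisfies $\gcd(j,|x|)=1$, so $\textup{(iii)}\Rightarrow\textup{(ii)}$ is immediate. Conversely, given $x\in G$ and $j\in\mathbb N$ with $\gcd(j,|x|)=1$, use the Chinese Remainder Theorem to choose $j'\in\mathbb N$ with $j'\equiv j\pmod{|x|}$ and $j'\equiv 1\pmod p$ for every prime $p$ dividing $|G|$ but not $|x|$; then $\gcd(j',|G|)=1$ and $x^{j'}=x^{j}$, so $\textup{(ii)}$ applied to $j'$ gives $x^{j}\sim x$ or $x^{j}\sim x^{-1}$.

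For $\textup{(i)}\Leftrightarrow\textup{(iii)}$ I would invoke the Ritter--Sehgal/Ferraz description of $\rho(G)$ via conjugacy classes, namely $\rho(G)=(\text{number of }\mathbb R\text{-classes of }G)-(\text{number of }\mathbb Q\text{-classes of }G)$ (see \cite{RS05,Fer04}; this follows from $\mathcal Z(\U(\ZZ G))=\mathcal Z(\ZZ G)^{\times}$, the Wedderburn decomposition $\mathcal Z(\QQ G)\cong\prod_i\QQ(\chi_i)$ over Galois-orbit representatives $\chi_i$ of $\Irr(G)$, and Dirichlet's unit theorem, which gives $\rho(G)=\sum_i\big(r_1(\QQ(\chi_i))+r_2(\QQ(\chi_i))-1\big)$, the count being reorganised by matching the real and complex places of $\QQ(\chi_i)$ to the $\mathbb R$-classes lying inside the corresponding $\mathbb Q$-class). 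Every $\mathbb Q$-class $C_x^{\mathbb Q}$ is a disjoint union of $\mathbb R$-classes, so the right-hand side is $\ge 0$, and it equals $0$ precisely when every $\mathbb Q$-class consists of a single $\mathbb R$-class, i.e.\ $C_x^{\mathbb Q}=C_x^{\mathbb R}$ for all $x\in G$; by the terminology recalled in Section~2 this says exactly that every element of $G$ is inverse semi-rational, which is $\textup{(iii)}$. As $\rho(G)=0$ is the definition of $G$ being a \cut group, we get $\textup{(i)}\Leftrightarrow\textup{(iii)}$, and hence the equivalence of all three.

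The main obstacle is the formula $\rho(G)=\#\{\mathbb R\text{-classes}\}-\#\{\mathbb Q\text{-classes}\}$: a self-contained proof of it requires the Wedderburn decomposition of $\QQ G$, the identification of the centres of its simple components with the fields $\QQ(\chi)$, Dirichlet's unit theorem, and the bookkeeping matching archimedean places to $\mathbb R$-classes --- standard but not formal. Once this is granted everything else is short. One should also observe that $\textup{(i)}\Leftrightarrow\textup{(iii)}$ is precisely the theorem of \cite{DMS05} and is, moreover, subsumed by the equivalences ``\cut $\Leftrightarrow$ inverse semi-rational'' recalled in Section~2; so, strictly speaking, the only verification not already available in the literature is the elementary $\textup{(ii)}\Leftrightarrow\textup{(iii)}$.
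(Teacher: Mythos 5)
Your proposal is correct. The paper itself gives no proof of this theorem --- it is quoted from \cite{DMS05} and \cite{MP18} --- but your argument is exactly the mechanism the paper later deploys: the identity $\rho(G)=n_\mathbb{R}-n_\mathbb{Q}$ is recalled as equation (3.1) (Ferraz), and the observation that this difference vanishes precisely when every $\mathbb{Q}$-class is a single $\mathbb{R}$-class, i.e.\ every element is inverse semi-rational, is the same reasoning the authors use in Section 3.1 to derive the \ecut criterion of Theorem 3.2; your CRT reduction of (ii) to (iii) is the standard elementary step and is sound.
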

Hence, to say that $G$ is a \cut group is same as saying that $G$ is an inverse semi-rational group.

		We provide analogous criterion for \ecut groups.
		
		\subsection{Conjugacy criterion}
		 Ferraz \cite{Fer04} proved that for a finite group $G$, the number of simple components of the Wedderburn decomposition of $\mathbb{R}G$ (respectively $\mathbb{Q}G$) equals the number of $\R$-classes (respectively $\Q$-classes) in $G$ and \begin{equation}\label{rankbydiffofcc}
			\rho(G)= n_\mathbb{R} - n_\mathbb{Q},
			\end{equation}  
		where  $n_\mathbb{R}$ (respectively $n_\mathbb{Q}$) denotes the number of $\mathbb{R}$-classes (respectively $\mathbb{Q}$-classes) in $G$. Clearly, for any $g\in G$, $C_{g} \subseteq C_{g}^{\mathbb{R}} \subseteq C_{g}^{\mathbb{Q}}$ and hence $n_\mathbb{R} \geq n_\mathbb{Q}$.

		If $G$ is a \cut group, then $n_\mathbb{R} =n_\mathbb{Q}$ and converse holds. 
		Similarly, $G$ is an \ecut group if and only if $n_\mathbb{R} - n_\mathbb{Q} \leq 1$. Let $S$ be the set of all inverse semi-rational elements in $G$. Clearly, $G$ is a \cut group if and only if $G=S$. Otherwise, for any $x\in G\setminus S$, there exists $j(\neq\pm 1)$, with $\gcd(j,|x|)=1$,  such that $x^j\nsim x$ and $x^j \nsim x^{-1}$, i.e., $x^j$ and $x$ have disjoint $\mathbb{R}$-classes for such $j$. If $G$ is an \ecut group which is not \cut, then $n_\mathbb{R} - n_\mathbb{Q} = 1$. Hence, we must have $C_x^{\mathbb{Q}}= C_{x^j}^{\mathbb{R}} ~\dot{\cup}~ C_{x}^{\mathbb{R}}$ for any such $j$, where $\dot{\cup}$ denotes the disjoint union. Clearly, if $x\in G\setminus S$, then $C_x^{\mathbb{Q}}\subseteq G\setminus S$. Rather, $G\setminus S=C_x^{\mathbb{Q}}$. Because, for any $y\in G\setminus S$, the $\mathbb{Q}$-class of $y$ splits into at least two $\mathbb{R}$-classes, which contradicts $n_\mathbb{R} - n_\mathbb{Q} \leq 1$, if $y\not \in C_x^{\mathbb{Q}}$. Hence, we conclude the following theorem.
		 
		 \begin{theorem}\label{theorem_CC}
		 	
		 	Let $G$ be a finite group and let $S$ be the set of all inverse semi-rational elements in $G$. Then, $G$ is an \ecut group if and only if one of the following holds:
		 	\begin{enumerate}
		 		\item[(i)] $G=S$. In this case, $G$ is a \cut group.
		 		\item[(ii)] $G\neq S$ and $G\setminus S=C_x^{\mathbb{Q}}=C_x^{\mathbb{R}}~\dot{\cup}~ C_{x^j}^\mathbb{R}$, 
		 		where $x$ is any element of $G\setminus S$ and \linebreak $j\in \{ 1 < i < |x|:gcd(i,|x|)=1, ~x^i\nsim x, ~x^i\nsim x^{-1}\}$.
		 	\end{enumerate}

		 \end{theorem}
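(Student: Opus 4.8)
The plan is to deduce everything from Ferraz's identity \eqref{rankbydiffofcc}, namely $\rho(G) = n_{\mathbb{R}} - n_{\mathbb{Q}}$, combined with the elementary fact that each $\mathbb{Q}$-class of $G$ is a disjoint union of $\mathbb{R}$-classes. For a $\mathbb{Q}$-class $C$ let $r(C)$ denote the number of $\mathbb{R}$-classes it contains; then
\[
\rho(G) \;=\; n_{\mathbb{R}} - n_{\mathbb{Q}} \;=\; \sum_{C}\bigl(r(C)-1\bigr),
\]
the sum taken over all $\mathbb{Q}$-classes $C$, every term being a nonnegative integer. So $G$ is \ecut precisely when this sum is $0$ or $1$.

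First I would pin down the relation between these terms and the set $S$. Using that $|x^{j}| = |x|$ whenever $\gcd(j,|x|)=1$ and the Chinese Remainder Theorem, one checks that $C_x^{\mathbb{Q}} = \bigcup_{\gcd(j,|x|)=1} C_{x^{j}}$; hence $x \in S$ if and only if $C_x^{\mathbb{Q}} = C_x^{\mathbb{R}}$, i.e. $r(C_x^{\mathbb{Q}})=1$. Since lying in a common $\mathbb{Q}$-class is an equivalence relation, inverse semi-rationality is constant on each $\mathbb{Q}$-class, so $S$ is a union of $\mathbb{Q}$-classes and, for a $\mathbb{Q}$-class $C$, one has $C \subseteq S \iff r(C)=1$ and $C \subseteq G\setminus S \iff r(C) \ge 2$. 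Feeding this back, $\rho(G) = \sum_{C \subseteq G\setminus S}(r(C)-1)$, a sum of strictly positive integers, so $\rho(G)$ is bounded below by the number of $\mathbb{Q}$-classes contained in $G\setminus S$.

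It then remains to read off the two cases. If $\rho(G)\le 1$, either $G\setminus S$ contains no $\mathbb{Q}$-class, which means $G=S$ --- case (i), and $G$ is a \cut group by the group-theoretic criterion for \cut groups recalled above --- or $G\setminus S$ consists of a single $\mathbb{Q}$-class $C$ with $r(C)=2$. In the second case, since $\mathbb{Q}$-classes partition $G$, we get $G\setminus S = C = C_x^{\mathbb{Q}}$ for every $x \in G\setminus S$; this $\mathbb{Q}$-class splits into exactly two $\mathbb{R}$-classes, one of which is $C_x^{\mathbb{R}}$. As $x \notin S$, the set $\{\,1<i<|x|:\gcd(i,|x|)=1,\ x^{i}\nsim x,\ x^{i}\nsim x^{-1}\,\}$ is non-empty, and for any $j$ in it $C_{x^{j}}^{\mathbb{R}}$ is an $\mathbb{R}$-class inside $C$ distinct from $C_x^{\mathbb{R}}$, hence equal to the remaining one; this yields $C_x^{\mathbb{Q}} = C_x^{\mathbb{R}}~\dot{\cup}~C_{x^{j}}^{\mathbb{R}}$ as in (ii). Conversely, if (ii) holds then $r(C_x^{\mathbb{Q}})=2$ while $r(C)=1$ for every other $\mathbb{Q}$-class, so $\rho(G)=1$ and $G$ is \ecut.

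The substantive input is entirely \eqref{rankbydiffofcc}; the work left for the proof is only the bookkeeping, the two most delicate points being that inverse semi-rationality is indeed constant on $\mathbb{Q}$-classes (so that ``$G\setminus S = C_x^{\mathbb{Q}}$ for any $x$'' is well posed) and that, for a $\mathbb{Q}$-class carrying exactly two $\mathbb{R}$-classes, the second $\mathbb{R}$-class is unique, which makes the statement independent of the choice of $j$.
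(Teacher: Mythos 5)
Your argument is correct and follows essentially the same route as the paper: both rest on Ferraz's identity \eqref{rankbydiffofcc} and on counting how each $\mathbb{Q}$-class decomposes into $\mathbb{R}$-classes, so that $\rho(G)\le 1$ forces either every $\mathbb{Q}$-class to be a single $\mathbb{R}$-class or exactly one $\mathbb{Q}$-class to split into exactly two. Your explicit bookkeeping via $\rho(G)=\sum_{C}(r(C)-1)$ and the remarks that $S$ is a union of $\mathbb{Q}$-classes and that the second $\mathbb{R}$-class is independent of the choice of $j$ merely make precise what the paper leaves implicit.
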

Note that if $G$ is not a \cut group, then clearly, there exist elements in $G$ whose $\mathbb{Q}$-classes are not same as their $\mathbb{R}$-classes. \Cref{theorem_CC} states that if $G$ is an \ecut group which is not a \cut group, then all such elements, i.e., elements which are not inverse semi-rational in $G$, form a single $\mathbb{Q}$-class which is a disjoint union of precisely two $\mathbb{R}$-classes.\\

	 The following corollary follows directly from the above theorem. 
	 
	 \begin{corollary}\label{cyclic_ecut}
	 	 A cyclic group $C_n$ of order $n$ is an \ecut group if and only if \linebreak $n \in \{1,2,3,4,5,6,8,12\}.$
	 \end{corollary}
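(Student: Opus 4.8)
The plan is to apply \Cref{theorem_CC} with $G=C_n=\langle x\rangle$. Since $C_n$ is abelian, every conjugacy class is a singleton, so the $\mathbb{R}$-class of $x^i$ is $\{x^i,x^{-i}\}$ and, by the definition of the $\mathbb{Q}$-class together with $e=n$, the $\mathbb{Q}$-class of $x^i$ is exactly the set of all elements of $C_n$ having the same order as $x^i$. The first step is to identify the set $S$ of inverse semi-rational elements: for $d\mid n$, an element of order $d$ is inverse semi-rational precisely when every $j$ coprime to $d$ satisfies $j\equiv\pm 1\pmod d$, i.e. $(\mathbb{Z}/d\mathbb{Z})^{\times}=\{\pm 1\}$, which holds exactly for $d\in\{1,2,3,4,6\}$. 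Hence $S$ is the union of the $\mathbb{Q}$-classes of all elements whose order lies in $\{1,2,3,4,6\}$, and $G\setminus S$ is the union of the $\mathbb{Q}$-classes of elements of order $d$ taken over all divisors $d$ of $n$ with $d\notin\{1,2,3,4,6\}$.

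Next I would split according to \Cref{theorem_CC}. Case (i), $G=S$, amounts to every divisor of $n$ lying in $\{1,2,3,4,6\}$, equivalently $\varphi(n)\le 2$, equivalently $n\in\{1,2,3,4,6\}$; these are the cyclic \cut groups. For case (ii), $G\setminus S$ must be a single $\mathbb{Q}$-class, which forces $n$ to have exactly one divisor $d_0\notin\{1,2,3,4,6\}$; moreover this $\mathbb{Q}$-class, of size $\varphi(d_0)$, must be a disjoint union of precisely two $\mathbb{R}$-classes, and since $d_0>2$ the elements of order $d_0$ fall into $\varphi(d_0)/2$ pairs $\{x^k,x^{-k}\}$, so we need $\varphi(d_0)/2=2$, i.e. $d_0\in\{5,8,10,12\}$.

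Finally I would run through these four values of $d_0$. If $d_0=10$ then $5\mid n$, so $5$ is a second divisor of $n$ outside $\{1,2,3,4,6\}$, contradicting uniqueness; thus $d_0\ne 10$. For $d_0\in\{5,8,12\}$, any proper multiple $n$ of $d_0$ satisfies $n>6$ and $n\ne d_0$, which again produces two distinct ``bad'' divisors $d_0$ and $n$; hence $n=d_0$, and conversely $n=5,8,12$ indeed each have $d_0$ as their unique divisor outside $\{1,2,3,4,6\}$, so they are \ecut (and not \cut). Combining cases (i) and (ii) gives $n\in\{1,2,3,4,5,6,8,12\}$. The only delicate point is the divisor bookkeeping in this last step; no deeper ingredient is needed. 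As a cross-check one may instead argue purely arithmetically, using \eqref{rankbydiffofcc} to get $\rho(C_n)=\lfloor n/2\rfloor+1-\tau(n)$, reducing the claim to solving $\lfloor n/2\rfloor\le\tau(n)$, which forces $n$ bounded via $\tau(n)\le 2\sqrt n$ and is then checked directly for small $n$.
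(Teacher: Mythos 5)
Your proposal is correct and follows the paper's intended route: the paper simply asserts that the corollary ``follows directly from'' Theorem~\ref{theorem_CC}, and your argument is exactly the spelled-out application of that theorem to $C_n$ (identifying $S$ via $(\ZZ/d\ZZ)^\times=\{\pm1\}$, then forcing a unique divisor $d_0$ with $\phi(d_0)=4$ and ruling out $d_0=10$ and proper multiples). Your arithmetic cross-check via $\rho(C_n)=\lfloor n/2\rfloor+1-\tau(n)$ also matches the computation the paper carries out later with strong Shoda pairs.
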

		 
		  If $G$ is an abelian group, then it is a direct product of cyclic groups. We recall that being \cut group (consequently, being \ecut group) is not a direct product closed property (\cite{BMP17}, Remark 1), unless the component groups satisfy certain requisite conditions (\cite{Mah18}, Section 3). It is of natural interest to study the behaviour of \ecut groups with respect to direct products, under suitable restrictions. \\
		  
		  To proceed further, we first recall another equivalent criterion for a group $G$ to be a \cut group.
		  
		  \begin{theorem}\label{theorem_SC_cut}(\cite{MP18}, Theorem 5)
		  	Let $G$ be a finite group.  Then the following are equivalent:
		  	
		  	\begin{enumerate}
		  		\item[(i)] $G$ is a \cut group.
		  		\item[(ii)] The character field $\mathbb{Q}(\chi):=\{\mathbb{Q}({\chi(g)~|~ g\in G})\}$ of each absolutely irreducible character $\chi$ of $G$ is either $\mathbb{Q}$ or an imaginary quadratic field.
		  		\item[(iii)] If $\mathbb{Q}G\cong \bigoplus_i M_{n_i}(D_i)$ is the Wedderburn decomposition of $\mathbb{Q}G$, where the simple component $M_{n_i}(D_i)$ 
		  		denotes the algebra of  $n_i\times n_i$ matrices over the division ring $D_i$, then the centre $\mathcal{Z}(D_i)$ of each division ring $D_i$ is $\mathbb{Q}$ or an imaginary quadratic field.
		  	\end{enumerate}

		  \end{theorem}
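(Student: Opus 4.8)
The plan is to reduce all three conditions to a single rank formula for $\rho(G)$ expressed through the arithmetic of the centres of the Wedderburn components of $\mathbb{Q}G$, and then read the equivalences off it. Write $\mathbb{Q}G\cong\bigoplus_i M_{n_i}(D_i)$ as in (iii), set $K_i:=\mathcal{Z}(D_i)=\mathcal{Z}\!\left(M_{n_i}(D_i)\right)$, and let $r_i$ (resp.\ $s_i$) be the number of real embeddings (resp.\ conjugate pairs of complex embeddings) of the number field $K_i$, so $[K_i:\mathbb{Q}]=r_i+2s_i$. The key claim is
\[
\rho(G)=\sum_i (r_i+s_i-1).
\]

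To prove this I would invoke Ferraz's equality $\rho(G)=n_{\mathbb{R}}-n_{\mathbb{Q}}$ recalled above, where $n_{\mathbb{Q}}$ (resp.\ $n_{\mathbb{R}}$) is the number of simple components of $\mathbb{Q}G$ (resp.\ $\mathbb{R}G$). Clearly $n_{\mathbb{Q}}$ is the number of indices $i$. For $n_{\mathbb{R}}$, extension of scalars gives $M_{n_i}(D_i)\otimes_{\mathbb{Q}}\mathbb{R}\cong M_{n_i}\!\left(D_i\otimes_{\mathbb{Q}}\mathbb{R}\right)$, and since $K_i\otimes_{\mathbb{Q}}\mathbb{R}\cong\mathbb{R}^{r_i}\times\mathbb{C}^{s_i}$, the algebra $D_i\otimes_{\mathbb{Q}}\mathbb{R}$ (hence $M_{n_i}(D_i)\otimes_{\mathbb{Q}}\mathbb{R}$) splits into exactly $r_i+s_i$ simple factors; therefore $n_{\mathbb{R}}=\sum_i(r_i+s_i)$, which yields the formula. (Equivalently, one may observe that $\mathcal{Z}(\U(\mathbb{Z}G))=\U(\mathcal{Z}(\mathbb{Z}G))$, that $\mathcal{Z}(\mathbb{Z}G)$ is a $\mathbb{Z}$-order in $\mathcal{Z}(\mathbb{Q}G)=\prod_i K_i$, and apply Dirichlet's unit theorem for orders.)

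With the formula in hand, (i)$\Leftrightarrow$(iii) is immediate: every summand satisfies $r_i+s_i\ge 1$, so $\rho(G)=0$ precisely when $r_i+s_i=1$ for all $i$; and for a number field $K$ the relation $r+s=1$ forces either $(r,s)=(1,0)$, i.e.\ $K=\mathbb{Q}$, or $(r,s)=(0,1)$, i.e.\ $[K:\mathbb{Q}]=2$ with no real embedding, i.e.\ $K$ imaginary quadratic. Thus $G$ is a \cut group if and only if every $\mathcal{Z}(D_i)$ is $\mathbb{Q}$ or an imaginary quadratic field, which is (iii). The equivalence (ii)$\Leftrightarrow$(iii) is then a translation between characters and components: the simple components of $\mathbb{Q}G$ are indexed by the Galois orbits on $\Irr(G)$, and the component attached to the orbit of $\chi$ has centre precisely the character field $\mathbb{Q}(\chi)$; hence requiring each $\mathcal{Z}(D_i)$ to be $\mathbb{Q}$ or imaginary quadratic is the same as requiring each $\mathbb{Q}(\chi)$, $\chi\in\Irr(G)$, to be $\mathbb{Q}$ or imaginary quadratic. (Alternatively, (i)$\Leftrightarrow$(ii) is already one of the equivalent descriptions of a \cut group recorded in Section 2, so only (ii)$\Leftrightarrow$(iii) needs this remark.)

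I expect the main obstacle to be the rank formula itself — that is, correctly counting the simple components of $\mathbb{R}G$ through the archimedean places of the fields $K_i$ (equivalently, a careful application of Dirichlet's unit theorem to the order $\mathcal{Z}(\mathbb{Z}G)$), and correctly identifying $\mathcal{Z}(D_i)$ with $\mathbb{Q}(\chi)$. These are standard but load-bearing facts from the representation theory of $\mathbb{Q}G$; granting them, the two equivalences are one-line deductions.
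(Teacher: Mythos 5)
Your proof is correct. Note that the paper does not actually prove this statement --- it is quoted verbatim from \cite{MP18} --- but the argument you give (Dirichlet's unit theorem applied to the order $\mathcal{Z}(\mathbb{Z}G)$ inside $\mathcal{Z}(\mathbb{Q}G)=\prod_\chi \mathbb{Q}(\chi)$, equivalently counting the archimedean places of the centres $\mathcal{Z}(D_i)\cong\mathbb{Q}(\chi)$ of the Wedderburn components, so that $\rho(G)=\sum_i(r_i+s_i-1)$ and $\rho(G)=0$ forces each centre to be $\mathbb{Q}$ or imaginary quadratic) is precisely the method the authors themselves use to establish the extended analogue, Theorem~\ref{theorem_SC_ecut}, so your route coincides in substance with the paper's.
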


		{\subsection{Simple components}
			
		If $G$ is an \ecut group, we study the simple components of $\mathbb{Q}G$ analogous to Theorem \ref{theorem_SC_cut}.
			\begin{theorem}\label{theorem_SC_ecut}
				A finite group $G$ is an e-cut group if and only if the centre of every simple component of $\mathbb{Q}G$ is either $\mathbb{Q}$ or $\mathbb{Q}(\sqrt{-d})$ for some $d >0$, except possibly for one simple component for which the centre may be either real quadratic extension of $\mathbb{Q}$ or a degree $4$ extension of $\mathbb{Q}$ with no real embeddings. 
			\end{theorem}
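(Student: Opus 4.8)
The plan is to read the theorem off the identity $\rho(G)=n_{\mathbb{R}}-n_{\mathbb{Q}}$ of \eqref{rankbydiffofcc} together with Ferraz's interpretation of $n_{\mathbb{Q}}$ and $n_{\mathbb{R}}$ as the numbers of simple components in the Wedderburn decompositions of $\mathbb{Q}G$ and $\mathbb{R}G$. Write $\mathbb{Q}G\cong\bigoplus_{i=1}^{n_{\mathbb{Q}}}A_i$ with each $A_i$ simple, and set $K_i:=\mathcal{Z}(A_i)$, a number field. Since $G$ is finite, $\mathbb{R}G\cong\bigoplus_i\bigl(A_i\otimes_{\mathbb{Q}}\mathbb{R}\bigr)$ is semisimple, so I need only count the simple components of each $A_i\otimes_{\mathbb{Q}}\mathbb{R}$. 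Its centre is $\mathcal{Z}(A_i)\otimes_{\mathbb{Q}}\mathbb{R}=K_i\otimes_{\mathbb{Q}}\mathbb{R}$, and the number of simple components of a semisimple algebra equals the number of field factors of its centre; if $K_i$ has $r_1^{(i)}$ real embeddings and $2r_2^{(i)}$ complex ones, then $K_i\otimes_{\mathbb{Q}}\mathbb{R}\cong\mathbb{R}^{r_1^{(i)}}\times\mathbb{C}^{r_2^{(i)}}$, so $A_i$ contributes exactly $r_1^{(i)}+r_2^{(i)}$ simple components to $\mathbb{R}G$. Summing over $i$ gives
\[
\rho(G)=n_{\mathbb{R}}-n_{\mathbb{Q}}=\sum_{i=1}^{n_{\mathbb{Q}}}\bigl(r_1^{(i)}+r_2^{(i)}-1\bigr),
\]
which is a sum of non-negative integers because $r_1^{(i)}+r_2^{(i)}\ge 1$ for every number field.

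Since the summands are non-negative integers, $G$ is \ecut (i.e.\ $\rho(G)\le 1$) if and only if every summand is $0$ except possibly one, which must then equal $1$; so both directions of the theorem will follow once I identify the fields $K_i$ giving each value. A summand is $0$ precisely when $r_1^{(i)}+r_2^{(i)}=1$, that is $(r_1^{(i)},r_2^{(i)})\in\{(1,0),(0,1)\}$, equivalently $K_i=\mathbb{Q}$ or $K_i$ is imaginary quadratic; and a summand is $1$ precisely when $r_1^{(i)}+r_2^{(i)}=2$, that is $(r_1^{(i)},r_2^{(i)})\in\{(2,0),(1,1),(0,2)\}$. Here I would invoke that $K_i=\mathbb{Q}(\chi)$ for some $\chi\in\Irr(G)$, hence $K_i\subseteq\mathbb{Q}(\zeta_e)$ with $e$ the exponent of $G$, so $K_i$ is an abelian --- in particular normal --- number field; such a field has all its embeddings into $\mathbb{C}$ with the same image, hence is totally real or totally imaginary, which excludes the mixed signature $(1,1)$. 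The two surviving possibilities, $(2,0)$ and $(0,2)$, are exactly ``$K_i$ real quadratic'' and ``$K_i$ a degree-$4$ field with no real embeddings'', matching the exceptional component in the statement; specialising to $\rho(G)=0$ recovers \Cref{theorem_SC_cut}.

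I do not anticipate a genuine obstacle, as the argument is bookkeeping on top of \eqref{rankbydiffofcc}; the two points worth handling carefully are the following. First, the component count for $A_i\otimes_{\mathbb{Q}}\mathbb{R}$: one may quote $\mathcal{Z}(A\otimes_{\mathbb{Q}}\mathbb{R})=\mathcal{Z}(A)\otimes_{\mathbb{Q}}\mathbb{R}$, or argue concretely from $A_i\cong M_{n_i}(D_i)$ with $D_i$ central simple over $K_i$, writing $A_i\otimes_{\mathbb{Q}}\mathbb{R}\cong M_{n_i}\bigl(D_i\otimes_{K_i}(K_i\otimes_{\mathbb{Q}}\mathbb{R})\bigr)$ and noting that each of the $r_1^{(i)}+r_2^{(i)}$ scalar extensions $D_i\otimes_{K_i}\mathbb{R}$ and $D_i\otimes_{K_i}\mathbb{C}$ is again a simple algebra, so that $\mathbb{R}G$ acquires no components beyond these. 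Second, the one non-formal input is that each $K_i$ is abelian over $\mathbb{Q}$: this is what rules out a cubic (or, more generally, any odd-degree or mixed-signature) centre as the exceptional component, and thereby forces it to have degree $2$ and be real, or degree $4$ and be totally imaginary. Everything else is routine verification.
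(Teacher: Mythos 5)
Your proposal is correct, but it reaches the conclusion by a different mechanism than the paper. The paper works with the order $\mathcal{Z}(\mathbb{Z}G)$ inside $\mathcal{Z}(\mathbb{Q}G)=\oplus_{\chi}\mathbb{Q}(\chi)$, compares it with the maximal order $\oplus_{\chi}\mathcal{O}_{\chi}$, and applies Dirichlet's unit theorem to each $\U(\mathcal{O}_{\chi})$ to read off which character fields give rank $0$ and which give rank $1$. You instead start from the identity $\rho(G)=n_{\mathbb{R}}-n_{\mathbb{Q}}$ of \eqref{rankbydiffofcc} and count the simple components that each $\mathbb{Q}G$-component contributes to $\mathbb{R}G$ after base change, obtaining $\rho(G)=\sum_i\bigl(r_1^{(i)}+r_2^{(i)}-1\bigr)$; the paper obtains the same sum from the unit-rank side. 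Both routes are standard and both immediately give the equivalence as a statement about non-negative summands. One genuine merit of your write-up is that you make explicit why the signature $(1,1)$ (a cubic field with one real embedding, which also has $r_1+r_2=2$) cannot occur: the centre of a simple component is a character field, hence abelian over $\mathbb{Q}$, hence totally real or totally imaginary. The paper's proof passes over this point silently when it asserts that rank $1$ forces a real quadratic or a totally imaginary quartic field, so your version actually closes a small gap in the published argument rather than introducing one.
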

			\begin{proof} For a group $G$, the centre $\mathcal{Z}(\mathbb{Z}G)$ is an order in $\mathcal{Z}(\mathbb{Q}G)$ which equals $\oplus_{\chi\in \Irr(G)} \Q(\chi)$. Thus, $\mathcal{Z}(\mathbb{Z}G)$ is of finite additive index in the unique 
				maximal order $\oplus_{\chi\in \Irr(G)} \mathcal{O}_\chi$ of $\oplus_{\chi\in \Irr(G)} \Q(\chi)$, where $\mathcal{O}_\chi$ denotes the ring of integers in $\Q(\chi)$, for  ${\chi\in \Irr(G)}$. Therefore, the unit group of $\mathcal{Z}(\ZZ G)$ is of finite index in the multiplicative group
				$\oplus_{\chi\in \Irr(G)} {\U}(\mathcal{O}_\chi)$ and hence $\mathcal{Z}({\U}(\ZZ G))$ has the same rank as $\oplus_{\chi\in \Irr(G)} {\U}(\mathcal{O}_\chi)$. Then, 
				the condition that $G$ is \ecut translates to $\U(\mathcal{O}_\chi)$ being finite for every $\chi\in \Irr(G)$, except possibly for one $\chi$ for which
				$\rho(\U(\mathcal{O}_\chi) )=1$.
				
				 For $\chi \in \Irr(G) $, if $\rho(\U(\mathcal{O}_\chi) )=0$, then it follows from Dirichlet's unit theorem, that $\mathbb{Q}(\chi)$ is either $\mathbb{Q}$ or  $\mathbb{Q}(\sqrt{-d})$ for some $d >0$. Also, for a $\chi' \in \Irr(G)$, if  $\rho(\U(\mathcal{O}_{\chi'}) )=1$, then Dirichlet's unit theorem yields that $\mathbb{Q}(\chi')$ is either real quadratic extension of $\mathbb{Q}$ or a degree $4$ extension of $\mathbb{Q}$ with no real embeddings. Clearly, the converse holds.
				
			\end{proof} 
		

	\subsection{Direct products}Let $G=H\times K$ be the direct product of two non-trivial groups $H$ and $K$. If $G$ is \cut, then by Theorem \ref{theorem_SC_cut}, it follows that $H$ and $K$ are \cut groups. The converse is obviously not true. For instance, $C_3\times C_4$ is not a \cut group. However, if one of $H$ or $K$ is real (and hence rational), then it follows by the definitions that $H\times K$ is a \cut group.
		
		Likewise, if $G$ is \ecut, then by Theorem \ref{theorem_SC_ecut}, it follows that $H$ and $K$ are \ecut groups. The converse is not true even if one of $H$ or $K$ is rational. For instance, $C_5\times C_2$ is not an \ecut group. Rather, we observe that if $H\times K$ is an \ecut group, then both $H$ and $K$ have to be \cut groups. Indeed, let $H$ and $K$ be \ecut groups such that $H$ is an \ecut group which is not \cut, so that there exists an irreducible character $\theta$ of $H$ such that $\mathbb{Q}(\theta)$ is neither $\mathbb{Q}$ nor an imaginary quadratic field. Since, $|K|\geq 2$, let $\psi_1$ and $\psi_2$ be two distinct irreducible characters of $K.$ Now, the characters $\theta \times \psi_1$ and $\theta \times \psi_2$ $\in  \Irr(H\times K)$ are such that they yield two character fields different from $\mathbb{Q}$ or imaginary quadratic, contradicting the fact that $H\times K$ is \ecut. It follows that if an \ecut group $G=H_1\times H_2 \times... \times H_r$, ($r>1$) is a direct product of non-trivial groups, then each $H_i$ is a \cut group. In other words, we have the following:

		\begin{remark}\label{Remark_DirectProduct}
		If $G$ is an \ecut group, which a direct product of groups $H_1$, $H_2$,....,$H_r$, then either each $H_i$, $1\leq i \leq r$, is a \cut group, or if  for some $j$,  $H_j$ is an \ecut group but not a \cut group,  then $G=H_j$, i.e., each $H_i$, $i\neq j$ is trivial.
		\end{remark} 
	
	We use this remark to classify abelian \ecut groups.\\
		
		Let $G$ be an abelian group, so that we may write $G$ as direct product of cyclic groups, $G\cong C_{n_1} \times C_{n_2}\times...\times C_{n_k}$, where $n_i$ divides $n_{i-1}$ for each $i$. Now, if there exists $j$, such that $C_{n_j}$ is not \cut, then $G=C_{n_j}$, with $n_j \in \{5,8,12\}$.  Otherwise, each $C_{n_i}$ is a \cut group, and hence, exponent of $G$, which equals $n_1$ is $2,3,4$ or $6$. Conversely, any abelian group of exponent $2,3,4$ or $6$ is \cut group and consequently \ecut and cyclic groups of order $5,8$ and $12$ are also \ecut. Thus, we conclude the following:
		
		\begin{theorem}\label{Theorem_abelian_ecut}
An abelian group $G$ is an \ecut group if and only if either $G$ is a cyclic group of order $5,8$ or $12$,  or the exponent of $G$ divides $4$ or $6$.
		\end{theorem}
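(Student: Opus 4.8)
The plan is to reduce the abelian case to the cyclic case already settled in \Cref{cyclic_ecut}, using the invariant‑factor decomposition together with \Cref{Remark_DirectProduct}. Write $G\cong C_{n_1}\times C_{n_2}\times\cdots\times C_{n_k}$ with $n_i\mid n_{i-1}$ for every $i$, so that $n_1$ is the exponent of $G$ and $G$ is cyclic exactly when $k\leq 1$.

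Suppose first that $G$ is an \ecut group. If $k\geq 2$, then $G$ is a direct product of the non‑trivial groups $C_{n_1},\dots,C_{n_k}$, so \Cref{Remark_DirectProduct} forces either every $C_{n_i}$ to be a \cut group, or $G=C_{n_j}$ for a single $j$; the latter is incompatible with $k\geq 2$, hence every $C_{n_i}$ is \cut. A cyclic group $C_m$ is \cut precisely when $m\in\{1,2,3,4,6\}$ (the cyclic instance of the classical description of abelian \cut groups recalled above, which also follows from \Cref{cyclic_ecut} after discarding the orders $5,8,12$). Since $n_1$ is itself one of the $n_i$, we conclude that the exponent $n_1$ lies in $\{1,2,3,4,6\}$, i.e.\ it divides $4$ or $6$. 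If instead $k\leq 1$, then $G=C_n$ is cyclic and \Cref{cyclic_ecut} gives $n\in\{1,2,3,4,5,6,8,12\}$; either $n\in\{1,2,3,4,6\}$, so the exponent divides $4$ or $6$, or $n\in\{5,8,12\}$.

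For the converse, if the exponent of $G$ divides $4$ or $6$ then $G$ is an abelian \cut group by the classical classification, hence \ecut; and $C_5$, $C_8$, $C_{12}$ are \ecut by \Cref{cyclic_ecut}. These are exactly the two families in the statement. There is no serious obstacle here: the substance is carried by \Cref{Remark_DirectProduct} (itself a consequence of \Cref{theorem_SC_ecut}) and by \Cref{cyclic_ecut}. The only point that needs a little care is the bookkeeping with invariant factors — namely, that once all cyclic factors are \cut the exponent $n_1$ automatically lands in $\{1,2,3,4,6\}$ because it is one of those factors, and, in the converse direction, that every abelian group of exponent dividing $4$ or $6$ (for instance $C_2\times C_2$, $C_2\times C_4$, $C_3\times C_3$, $C_2\times C_6$) is genuinely a \cut group and hence \ecut.
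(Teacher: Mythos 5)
Your proposal is correct and follows essentially the same route as the paper: decompose $G$ into invariant factors, apply \Cref{Remark_DirectProduct} to force either all cyclic factors to be \cut (so the exponent $n_1$ lies in $\{1,2,3,4,6\}$) or $G$ itself cyclic of order $5$, $8$ or $12$, and invoke the classical classification of abelian \cut groups together with \Cref{cyclic_ecut} for the converse. No substantive differences.
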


It may be noted that if $G$ is an abelian group, then the expression of $\rho(G)$ is known in (\cite{AA69}, Theorem 4) and one may classify abelian \ecut groups using it.\\

	In the next section, we study nilpotent \ecut groups.

		\section{Nilpotent e-cut groups}
		In this section, we give classification of finite nilpotent groups $G$ which are \ecut. We begin by discussing finite \ecut $p$-groups.

	\subsection{ \ecut $p$-groups} \begin{theorem}\label{theorem_classification_pgroups} If $G$ is an \ecut $p$-group, then one of the following holds:
		
	 \begin{itemize} \item [(i)] $G \cong C_{5}$; \item [(ii)] $G$ is a \cut $3$-group; \item [(iii)] $G$ is a \cut 2-group;
			\item[(iv)] $G$ is a $2$-group and has an element $x$ of order $8$ such that $C_x^\mathbb{R}~\dot{\cup}~C_{x^3}^\mathbb{R}$ contains all the elements of $G$ which are not inverse semi-rational.

		
	 \end{itemize}\end{theorem}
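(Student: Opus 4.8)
The plan is to reduce everything to a single local computation at cyclic subgroups. For a $p$-group $G$ and $x\in G$ of order $p^k$, every power $x^i$ with $p\nmid i$ again has order $p^k$, so the $\mathbb{Q}$-class $C_x^{\mathbb{Q}}$ is exactly the union of the $G$-conjugacy classes of the generators of $\langle x\rangle$. Writing $H_x\le(\mathbb{Z}/p^k\mathbb{Z})^\times=\Aut(\langle x\rangle)$ for the image of $N_G(\langle x\rangle)$ under conjugation, one has $x^i\sim_G x^j$ iff $ij^{-1}\in H_x$, and $x^i,x^j$ lie in the same $\mathbb{R}$-class iff $ij^{-1}\in H'_x:=\langle H_x,-1\rangle$; hence $C_x^{\mathbb{Q}}$ breaks into $[(\mathbb{Z}/p^k\mathbb{Z})^\times:H'_x]$ $\mathbb{R}$-classes and contributes $[(\mathbb{Z}/p^k\mathbb{Z})^\times:H'_x]-1$ to $\rho(G)=n_{\mathbb{R}}-n_{\mathbb{Q}}$, and summing over all $\mathbb{Q}$-classes recovers $\rho(G)$. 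The feature I will use again and again is that $H_x$, being a quotient of a subgroup of the $p$-group $G$, is itself a $p$-group.

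For $p\ge 5$ pick $y$ of order $p$ by Cauchy. Since $H_y$ is a $p$-subgroup of $(\mathbb{Z}/p\mathbb{Z})^\times$, a group of order $p-1$ prime to $p$, we get $H_y=1$, $H'_y=\{\pm1\}$, so $C_y^{\mathbb{Q}}$ contributes $(p-1)/2-1=(p-3)/2$ to $\rho(G)$. For $p\ge 7$ this exceeds $1$, so no $p$-group with $p\ge 7$ is \ecut. For $p=5$ it equals $1$, whence $\rho(G)=1$ and the class of order-$5$ elements is the only non-\cut $\mathbb{Q}$-class; then an element of order $25$ is impossible (its $\mathbb{Q}$-class would give a second positive contribution, as $H_x$ is a $5$-subgroup of the cyclic group $(\mathbb{Z}/25\mathbb{Z})^\times$ of order $20$), so $G$ has exponent $5$; and finally, since the number of cyclic subgroups of order $5$ is $(|G|-1)/4$ and — were all order-$5$ elements in a single $\mathbb{Q}$-class — would equal $[G:C_G(y)]$, a divisor of $|G|=5^r$ with $(5^r-1)/4$ prime to $5$, we get $|G|=5$, i.e.\ $G\cong C_5$. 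This is case~(i).

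For $p=3$, $(\mathbb{Z}/3^k\mathbb{Z})^\times$ is cyclic of order $2\cdot 3^{k-1}$ and $H_x$ is a $3$-subgroup, so $|H_x|=3^j$ with $-1\notin H_x$; hence $|H'_x|=2\cdot 3^j$ and $[(\mathbb{Z}/3^k\mathbb{Z})^\times:H'_x]=3^{k-1-j}\in\{1,3,9,\dots\}$. So every $\mathbb{Q}$-class contributes $0$ or at least $2$ to $\rho(G)$, and $\rho(G)\le 1$ forces all contributions to vanish; thus $\rho(G)=0$ and $G$ is a \cut $3$-group, which is case~(ii).

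For $p=2$, if $G$ is \cut we are in case~(iii), so assume not. By \Cref{theorem_CC}, the set $S$ of inverse-semi-rational elements satisfies $G\setminus S=C_x^{\mathbb{Q}}=C_x^{\mathbb{R}}\,\dot{\cup}\,C_{x^j}^{\mathbb{R}}$ for some $x\in G\setminus S$ and suitable $j$, with exactly two $\mathbb{R}$-classes. Put $|x|=2^k$. Elements of order at most $4$ are always inverse semi-rational, so $k\ge 3$; the heart of the proof is to rule out $k\ge 4$. Assume $k\ge 4$. Since $[(\mathbb{Z}/2^k\mathbb{Z})^\times:H'_x]=2$ and $-1\in H'_x$, the subgroup $H'_x$ must be the unique index-$2$ subgroup of $(\mathbb{Z}/2^k\mathbb{Z})^\times$ containing $-1$, namely $\{\,i:i\equiv\pm1\pmod{8}\,\}$; in particular $H_x$ has no element $\equiv\pm3\pmod{8}$. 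On the other hand $x^2$ has order $2^{k-1}\ge 8$ and lies in $S$ (its order differs from that of the elements of $C_x^{\mathbb{Q}}$), so $x^2$ is inverse semi-rational, forcing $H'_{x^2}=(\mathbb{Z}/2^{k-1}\mathbb{Z})^\times$ and hence $H_{x^2}$ to contain some $t\equiv\pm3\pmod{8}$. Realising $t$ as conjugation by $h\in N_G(\langle x^2\rangle)$, one sees $h\notin N_G(\langle x\rangle)$ — otherwise $h$ would act on $\langle x\rangle$ as some $s$ with $s\equiv t\pmod{2^{k-1}}$, hence $s\equiv\pm3\pmod{8}$, contradicting $s\in H_x$ — so $y:=hxh^{-1}\notin\langle x\rangle$, while $y^2=x^{2t}$ generates $\langle x^2\rangle$. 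Thus $\langle x\rangle$ and $\langle y\rangle$ are two cyclic subgroups of order $2^k$ sharing the index-$2$ subgroup $\langle x^2\rangle$, and a study of the $2$-group $\langle x,y\rangle$ — in which $\langle x^2\rangle$ is normal with dihedral quotient — lets one produce yet another cyclic subgroup $\cong C_{2^k}$ that is not $G$-conjugate to $\langle x\rangle$ but whose generators are again non-inverse-semi-rational, yielding a second non-\cut $\mathbb{Q}$-class and $\rho(G)\ge 2$, a contradiction. Once $k=3$ is forced, the two $H'_x$-cosets in $(\mathbb{Z}/8\mathbb{Z})^\times$ are $\{1,7\}$ and $\{3,5\}$, so $C_x^{\mathbb{Q}}=C_x^{\mathbb{R}}\,\dot{\cup}\,C_{x^3}^{\mathbb{R}}$ with $|x|=8$, which is case~(iv). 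I expect this last step — excluding non-inverse-semi-rational elements of order $\ge 16$ — to be the real obstacle, since the invariant $H_x$ alone does not forbid them and one must exploit the global discrepancy between how $N_G(\langle x\rangle)$ acts on $\langle x\rangle$ and how $N_G(\langle x^2\rangle)$ acts on $\langle x^2\rangle$.
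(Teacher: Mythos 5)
Your handling of $p\ge 3$ is correct and complete: the reduction of $\rho(G)$ to the indices $[(\mathbb{Z}/p^k\mathbb{Z})^\times:H'_x]$ over $\mathbb{Q}$-classes is sound, the observation that $H_x$ is a $p$-group kills $p\ge 7$ and forces exponent $5$ when $p=5$, and your counting argument $(5^r-1)/4=[G:N_G(\langle y\rangle)]$ is a valid (and more self-contained) substitute for the paper's shortcut via $G/G'\cong C_5$ together with \Cref{Theorem_abelian_ecut}. Your $p=3$ argument likewise matches the paper's in substance. For $p=2$ you correctly isolate the one nontrivial assertion in item (iv), namely that a non-inverse-semi-rational element must have order exactly $8$, and your setup (the unique index-$2$ subgroup $\{i\equiv\pm1\bmod 8\}$, the element $h\in N_G(\langle x^2\rangle)\setminus N_G(\langle x\rangle)$, the second subgroup $\langle y\rangle$ with $\langle y^2\rangle=\langle x^2\rangle$) is all correct.

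The gap is the final step for $k\ge 4$: the claim that ``a study of $\langle x,y\rangle$ lets one produce yet another cyclic subgroup $\cong C_{2^k}$, not $G$-conjugate to $\langle x\rangle$, whose generators are not inverse semi-rational'' is asserted, not proved, and as stated it can literally fail. Since $y=hxh^{-1}$, the subgroup $\langle y\rangle$ \emph{is} conjugate to $\langle x\rangle$, so you need a genuinely third subgroup; but if, say, $[x,y]\in\langle x^4\rangle$, then the coset $xy\langle x^2\rangle$ contains no elements of order $2^k$ and $\langle x,y\rangle$ has only the two conjugate cyclic subgroups of order $2^k$ — any third one must be hunted for outside $\langle x,y\rangle$. (Concretely, in $G=(C_{16}\times C_2)\rtimes C_4$ with $c\colon x\mapsto x^3u$, one has $\langle x,y\rangle=C_{16}\times C_2$ containing only $\langle x\rangle$ and $\langle y\rangle$; the element that breaks \textsf{ecut}-ness is $xc^2$, which lies outside $\langle x,y\rangle$.) Even when a third order-$2^k$ cyclic subgroup is found, you must still rule out that its generators are inverse semi-rational via conjugating elements outside $\langle x,y\rangle$, or that it is $G$-conjugate to $\langle x\rangle$ after all. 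None of this is routine. It is worth noting that you have located a point the paper itself does not address: its proof of Case III simply takes any non-inverse-semi-rational $x$, notes $x^3\nsim x^{\pm1}$, and invokes \Cref{theorem_CC}, never establishing that $|x|=8$; so the order-$8$ claim in (iv) is not justified there either. Your proposal makes the missing step explicit but does not close it.
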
\begin{proof} 	Clearly, a group  $G$ satisfying any of the conditions (i)-(iv) is \ecut. Now, let $G$ be a $p$-group which is \ecut. As $\mathcal{Z}(G)$ is an abelian \ecut $p$-group, it follows that  $p\in \{2,3,5\}$ (by Theorem \ref*{Theorem_abelian_ecut}). We thus have the following three cases: \\
  \textbf{Case-I} $p=5$ \\ Here, $G/G'$ is an abelian $5$-group, and hence $G/G' \cong C_{5}$ (\Cref{Theorem_abelian_ecut}). One can verify that if $G$ is a finite nilpotent group such that $G/G'$ is cyclic then, $G$ is abelian. Consequently, $G \cong C_{5}$. \\ \textbf{Case-II} $p=3$ \\ Suppose $G$ is a $3$-group which is \ecut but not \cut. Then there exists $x \in G $ which is not inverse semi-rational. It follows from the proof of (\cite{BMP17}, Theorem 2) that $x^2\nsim x^{-1}$, i.e, $x$ and $x^2$ have disjoint $\mathbb{R}$-classes. But, then Theorem \ref{theorem_CC}(ii) is not satisfied, as in this case $x^4\not\in C_x^\mathbb{R}~\dot{\cup}~C_{x^2}^\mathbb{R}$.\\ \textbf{Case-III} $p=2$\\ Let $G$ be a $2$-group which is \ecut but not \cut. Then there exists $x \in G $ such that $x^{3}$ is not conjugate to $x$ or $x^{-1}$. Hence, by Theorem \ref*{theorem_CC}(iii), $C_x^\mathbb{Q}=C_x^\mathbb{R}~\dot{\cup}~C_{x^3}^\mathbb{R}$ is the set of all elements in $G$ which are not inverse semi-rational.

			\end{proof}
		
			  \subsection{Nilpotent \ecut groups} \begin{theorem} A finite nilpotent group $G$ is an e-cut group if and only if one of the following holds: \begin{itemize}\item [(i)] $G \cong C_{5};$ \item [(ii)] $G$ is a \cut $3$-group; \item [(iii)] $G$ is an \ecut $2$-group;  
		  		\item [(iv)] $G=H \times K$, where $H$ is a \cut $2$-group and $K$ is a \cut $3$-group such that
		  		
		  		\begin{enumerate}
		  			\item[(a)] either $H$ is real, i.e., $h\sim h^{-1}$ for every $h\in H$, or\item[(b)] there exists $h\in H$ such that $h\nsim h^{-1}$ and and for any $k\in K$, \linebreak $C_{(h,k)}^\mathbb{Q}=C_{(h,k)}^\mathbb{R}~\dot{\cup}~C_{{(h,k^{-1})}}^\mathbb{R}$ forms the set of all elements in $G$ which are not inverse semi-rational. 
		  		\end{enumerate}

		  	 \end{itemize} \end{theorem}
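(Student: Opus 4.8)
The plan is to combine Theorem~\ref{theorem_classification_pgroups} with Remark~\ref{Remark_DirectProduct}, exploiting the decomposition of a nilpotent group into its Sylow subgroups. First I would argue the easy direction: if $G$ satisfies any of (i)--(iv), then $G$ is \ecut. Cases (i)--(iii) are immediate (in (iii) one invokes Theorem~\ref{theorem_classification_pgroups}); for (iv) note that if $H$ is rational then $G=H\times K$ is \cut because $K$ is a \cut $3$-group and a direct product of a rational group with a \cut group is \cut, while in subcase (b) the condition on the $\mathbb{Q}$-class of $(h,k)$ is exactly the hypothesis of Theorem~\ref{theorem_CC}(ii), so $\rho(G)\le 1$.

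For the forward direction, suppose $G$ is a finite nilpotent \ecut group, so $G=\prod_{p\in\pi} S_p$ with $S_p\in\Syl_p(G)$. Since $\mathcal{Z}(G)$ is an abelian \ecut group, Theorem~\ref{Theorem_abelian_ecut} forces $\pi\subseteq\{2,3,5\}$. Now I split according to whether some $S_p$ fails to be \cut. If every $S_p$ is \cut, then each of $S_2,S_3,S_5$ is \cut; but a \cut $p$-group has $p\in\{2,3\}$ (since its centre is an abelian \cut $p$-group, hence of exponent dividing $6$ when cyclic or $\{2,3,4,6\}$ in general — so no nontrivial \cut $5$-group exists), hence $S_5$ is trivial and $G=S_2\times S_3$; then Remark~\ref{Remark_DirectProduct} does not yet constrain us, so I analyze this product directly (see below). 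If some $S_p$ is not \cut, then Remark~\ref{Remark_DirectProduct} (applied to the Sylow decomposition) says all the other Sylow factors are \cut; combined with the \ecut-but-not-\cut classification of $p$-groups in Theorem~\ref{theorem_classification_pgroups}, the non-\cut factor is either $C_5$ (forcing $G=C_5$, case (i)) or an \ecut-but-not-\cut $2$-group (and then every other Sylow factor is \cut, with the $5$-part trivial as above, giving $G=S_2$ or $G=S_2\times S_3$). So in all cases $G$ is one of: $C_5$, a \cut $3$-group, an \ecut $2$-group, or $H\times K$ with $H$ a \cut or \ecut $2$-group and $K$ a \cut $3$-group.

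It remains to pin down exactly when $H\times K$ (with $H$ a nontrivial $2$-group, $K$ a nontrivial \cut $3$-group) is \ecut, and to show this is precisely condition (iv). By Remark~\ref{Remark_DirectProduct} applied to $G=H\times K$, since $K$ is nontrivial, $H$ must be \cut; so $H$ is a \cut $2$-group and $K$ a \cut $3$-group, and it remains to decide when their product is \ecut. The key computation is to track $\mathbb{Q}$-classes and $\mathbb{R}$-classes in $H\times K$. For $(h,k)\in H\times K$ with $|h|=2^a$, $|k|=3^b$, a unit $j$ mod $|h||k|$ is determined by its residues mod $2^a$ and mod $3^b$; since $K$ is rational, $k$ is conjugate to $k^i$ for every $i$ coprime to $3$, so the $\mathbb{Q}$-class of $(h,k)$ splits over $\mathbb{R}$ exactly as the $\mathbb{Q}$-class of $h$ does, \emph{unless} $h$ fails to be real, in which case the extra possibility $k\mapsto k^{-1}$ can merge the two $\mathbb{R}$-classes of $h$. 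Concretely: if $H$ is real then $H$ is rational (a real \cut group is rational), so $H\times K$ is rational, hence \cut, hence \ecut — this is subcase (a). If $H$ is not real, pick $h\in H$ with $h\nsim h^{-1}$; since $H$ is \cut, every power $h^j$ with $\gcd(j,2)=1$ is conjugate to $h$ or $h^{-1}$, and both occur (as $h$ is not real), so $C_h^{\mathbb{Q}}=C_h^{\mathbb{R}}\,\dot\cup\,C_{h^{-1}}^{\mathbb{R}}$; then $(h,k)$ is not inverse semi-rational precisely because $(h,k)\mapsto(h^{-1},k)$ realises a power not reachable by $\pm 1$, while $(h,k)\mapsto(h,k^{-1})\sim(h,k^{-1})$ and $(h^{-1},k^{-1})\sim(h^{-1},k)$ show $C_{(h,k)}^{\mathbb{Q}}=C_{(h,k)}^{\mathbb{R}}\,\dot\cup\,C_{(h,k^{-1})}^{\mathbb{R}}$ (using rationality of $K$), and Theorem~\ref{theorem_CC} then says $G$ is \ecut if and only if this single $\mathbb{Q}$-class exhausts all non-inverse-semi-rational elements of $G$ — which is exactly subcase (b). The main obstacle is the bookkeeping in this last step: carefully verifying, via the Chinese Remainder parametrisation of units mod $|h|\,|k|$ and the rationality of $K$, that the non-inverse-semi-rational elements of $H\times K$ are governed entirely by the $2$-part and collapse into one $\mathbb{Q}$-class, so that no \ecut-but-not-\cut $2$-group $H$ can appear in the product (it could only appear with $K$ trivial, i.e.\ case (iii)).
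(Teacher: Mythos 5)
Your proposal is correct and follows essentially the same route as the paper: decompose the nilpotent group into its Sylow subgroups, apply \Cref{Remark_DirectProduct} together with the $p$-group classification of \Cref{theorem_classification_pgroups}, and settle the remaining case $G=H\times K$ by comparing $\mathbb{Q}$-classes with $\mathbb{R}$-classes via \Cref{theorem_CC} (the bookkeeping you spell out is exactly what the paper compresses into ``direct computations yield''). One small slip: in subcase (a) you assert that $H\times K$ is rational, which cannot hold since a nontrivial $3$-group is never rational; the conclusion you actually need --- that a rational group times a \cut group is \cut --- is the one you already justified correctly in the easy direction, so the argument stands.
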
 
	  	 
	  	 \begin{proof} Clearly, if $G$ satisfies one of (i)-(iv), then $G$ is an \ecut group. Conversely, suppose that $G$ is a nilpotent group which is \ecut. For a prime $p$ dividing $|G|$, let $P_p$ denote the Sylow $p$-subgroup of $G$. It follows from \Cref{Remark_DirectProduct} that $G=P_2\times P_3 \times P_5$. Further, if $P_5$ is non-trivial, then by \Cref{Remark_DirectProduct} and \Cref{theorem_classification_pgroups}(i), $G\cong C_5$, else $G=P_2\times P_3$. Again, by \Cref{Remark_DirectProduct},  if $P_3$ is \ecut, then $G=P_3$. Consequently, by \Cref{theorem_classification_pgroups}, we have that $G$ is a \cut $3$-group. Again, if $P_2$ is an \ecut group, then $G=P_2$, i.e., $G$ is an \ecut $2$-group. Otherwise, $G=P_2\times P_3$, where each $P_2$ and $P_3$ is a \cut group. Further, if $P_2$ is a real (or rational) group, it follows from (\cite{Mah18}, Theorem 3) that $P_2\times P_3$ is a \cut and hence an \ecut group. Therefore, we are now left with the case when $G=P_2\times P_3$, with $P_3$ a \cut group and $P_2$ a \cut group which is not rational. For notational convenience, denote $P_2$ by $H$ and $P_3$ by $K$. Hence, in this case, there exists $h\in H$ such that $h\nsim h^{-1}$. Direct computations yield that for any $k\in K$,  $C_{(h,k)}^\mathbb{Q}=C_{(h,k)}^\mathbb{R}~\dot{\cup}~C_{{(h,k^{-1})}}^\mathbb{R}$ is the set of all elements in $G$ which are not inverse semi-rational.

	  	 
  	  \end{proof} 
    

		\section{Monomial \ecut groups}
		
		In this section, we study monomial \ecut groups. In  particular, we classify split metacyclic \ecut groups. To begin with, we consider dihedral and generalized quaternion groups. \\

			\subsection{Dihedral and generalized quaternion \ecut groups}
		
		\begin{proposition}\label{dihedral_quaternion_ecut}
			
			Let $D_{2n}:=\langle a,b : a^n=1,b^2=1, a^b=a^{-1}\rangle $ be a dihedral group of order $2n$, where $n\geq 3$ and let $Q_{4m}:=\langle a,b ~|~a^{2m}=1, b^{2}=a^m, a^b=a^{-1}\rangle$ be a generalized quaternion group of order $4m$, where $m\geq 2$. Then,
			
\begin{enumerate}
	\item[(i)] $D_{2n}$ is an \ecut group if and only if $n\in \{3,4,5,6,8,12\}$.
\item[(ii)] $Q_{4m}$ is an \ecut group if and only if $m\in \{2,3,4,6\}$.
\end{enumerate}

		\end{proposition}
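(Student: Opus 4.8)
The plan is to use the conjugacy criterion of \Cref{theorem_CC} together with explicit knowledge of the conjugacy classes of $D_{2n}$ and $Q_{4m}$, reducing everything to an arithmetic condition on $n$ (resp.\ $m$).

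\textbf{Setup for the dihedral case.} First I would recall the conjugacy structure of $D_{2n}=\langle a,b\rangle$. The elements split into the rotations $a^i$ (for $0\le i<n$) and the reflections $a^ib$. For the rotations, $a^i\sim a^{-i}$ always (conjugation by $b$), and these are the only fusions among rotations; so $C_{a^i}=\{a^i,a^{-i}\}$ for $i\not\equiv 0,n/2$. For the reflections, when $n$ is odd all of them are conjugate, and when $n$ is even they fall into exactly two classes. Since $b^2=1$, every reflection is its own inverse, hence real, hence (being real) it is inverse semi-rational iff it is rational — and one checks $a^ib$ is always rational in $D_{2n}$ because all its powers (just $1$ and itself) lie in predictable classes; in any case, reflections never obstruct the \ecut property. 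So the whole question concerns the rotation subgroup $\langle a\rangle\cong C_n$ with the extra fusion $a^i\sim a^{-i}$ coming from $b$. Concretely, $a^i$ is inverse semi-rational in $D_{2n}$ iff for every $j$ coprime to $|a^i|$ we have $a^{ij}\sim a^{\pm i}$ in $D_{2n}$, i.e.\ $a^{ij}\in\{a^i,a^{-i}\}$, i.e.\ $ij\equiv \pm i \pmod n$. Thus $D_{2n}$ is \cut iff for all $d\mid n$ and all $j$ coprime to $d$ we have $j\equiv\pm1\pmod{d}$, which is exactly the classical condition $d\in\{1,2,3,4,6\}$ for all $d\mid n$, i.e.\ $n\mid 4$ or $n\mid 6$; equivalently $n\in\{1,2,3,4,6\}$ (with $n\ge 3$: $n\in\{3,4,6\}$). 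For the \ecut-but-not-\cut case I would invoke \Cref{theorem_CC}(ii): the non-inverse-semi-rational elements must form a single $\mathbb{Q}$-class splitting into exactly two $\mathbb{R}$-classes. Since in $D_{2n}$ the $\mathbb{R}$-class of a rotation $a^i$ is just $\{a^i,a^{-i}\}$ (the $b$-fusion is already accounted for), the $\mathbb{Q}$-class $C_{a^i}^{\mathbb{Q}}$ splits into exactly two $\mathbb{R}$-classes precisely when $|a^i|$ lies in the list of $d$ with $\varphi(d)/\,|\{\pm1 \bmod d\}| = 2$, i.e.\ $\varphi(d)=4$ with $d\notin\{1,2,3,4,6\}$ but still $-1$ acting trivially would give... — more carefully, the $\mathbb{Q}$-class of $a^i$ (order $d$) is $\{a^{ij}:\gcd(j,d)=1\}$ which has $\varphi(d)$ elements grouped into pairs $\{a^{ij},a^{-ij}\}$, so it is the union of $\varphi(d)/2$ $\mathbb{R}$-classes (for $d>2$). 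This equals $2$ exactly when $\varphi(d)=4$, i.e.\ $d\in\{5,8,10,12\}$. Then I also need that there is \emph{only one} such $\mathbb{Q}$-class, forcing $n$ to have exactly one divisor $d$ with $\varphi(d)=4$ and no divisor with $\varphi(d)>4$: checking $n\in\{5,8,12\}$ works ($d=5$; $d=8$; $d=12$ — note $10\nmid 8,12$ and for $n=12$ the divisors with $\varphi\ge4$ are only $12$), whereas $n=10$ fails since $d=10$ gives $\varphi(10)=4$ but also... actually $n=10$ has divisors $5$ and $10$, both with $\varphi=4$, giving two $\mathbb{Q}$-classes, so $\rho\ge2$; and larger/other $n$ fail similarly. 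This yields $n\in\{3,4,5,6,8,12\}$.

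\textbf{Setup for the quaternion case.} For $Q_{4m}=\langle a,b\mid a^{2m}=1,\,b^2=a^m,\,a^b=a^{-1}\rangle$ I would run the parallel analysis. The cyclic subgroup $\langle a\rangle\cong C_{2m}$ again has the fusion $a^i\sim a^{-i}$, so rotations contribute exactly the condition "$2m\in\{$\ecut-admissible orders$\}$" as above with $n$ replaced by $2m$. The elements outside $\langle a\rangle$ are the $a^ib$; here $(a^ib)^2=a^m\ne 1$, so $a^ib$ has order $4$, and its square is the central element $a^m$. One checks the conjugacy classes of these elements and that each $a^ib$ is inverse semi-rational (the relevant powers are $a^ib$ and $(a^ib)^{-1}=(a^ib)\cdot a^{-m}=a^{i+m}b$, which are conjugate — this is the standard fact that generalized quaternion groups have the involution-free "quaternionic" reality pattern). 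So again the obstruction lives entirely in $\langle a\rangle\cong C_{2m}$. Therefore $Q_{4m}$ is \ecut iff $C_{2m}$ is \ecut \emph{as embedded with the $\pm$ fusion}, i.e.\ iff $2m\in\{1,2,3,4,5,6,8,12\}$ in the sense of the rotation analysis — but we must be careful: $2m$ is even, and the \cut condition on $C_{2m}$-with-$\pm$-fusion is "$d\in\{1,2,3,4,6\}$ for all $d\mid 2m$", i.e.\ $2m\mid 4$ or $2m\mid 12$... this gives $2m\in\{2,4,6,12\}$, i.e.\ $m\in\{1,2,3,6\}$; but $m\ge2$, so $m\in\{2,3,6\}$ for the \cut case, plus the \ecut-not-\cut case $2m\in\{8\}$ (since $2m$ even and $\varphi(d)=4$ for a unique $d\mid 2m$: $2m=8$ gives $d=8$; $2m=16$ has $d=8,16$ with $\varphi(16)=8$ — fails; $2m=24$ has $d=8,12,24$ — fails; $2m=10,20$ involve $d=5,10$ with $\varphi=4$ each but $5\nmid 8$... wait $2m=10$ has divisors $5,10$ both $\varphi=4$, fails). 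So the extra case is $2m=8$, i.e.\ $m=4$. Combining: $m\in\{2,3,4,6\}$.

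\textbf{Main obstacle.} The genuinely delicate part is \emph{not} the cyclic/rotation bookkeeping (that is routine once one writes $ij\equiv\pm i\pmod n$) but rather verifying carefully that the reflections in $D_{2n}$ and the order-$4$ elements $a^ib$ in $Q_{4m}$ never contribute a non-inverse-semi-rational element and never cause an extra $\mathbb{R}$/$\mathbb{Q}$ discrepancy — and, in the \ecut-not-\cut direction, confirming via \Cref{theorem_CC}(ii) that the set of non-inverse-semi-rational elements is \emph{exactly} one $\mathbb{Q}$-class equal to the disjoint union of exactly two $\mathbb{R}$-classes, which forces the divisibility restriction that isolates $n\in\{5,8,12\}$ (resp.\ $m=4$) and rules out $n=10$ and similar near-misses. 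I would organize the proof as: (1) describe conjugacy classes of $D_{2n}$ and $Q_{4m}$; (2) show reflections / order-$4$ elements are always inverse semi-rational, so the property is controlled by $\langle a\rangle$; (3) translate inverse semi-rationality of $a^i$ into the congruence $ij\equiv\pm i\pmod n$ (resp.\ $\bmod\,2m$); (4) apply \Cref{theorem_CC} to extract the arithmetic condition and enumerate the admissible $n$ (resp.\ $m$); (5) conversely check each listed value gives $\rho\le 1$, using \Cref{theorem_SC_ecut} or a direct count of $n_{\mathbb{R}}-n_{\mathbb{Q}}$ if a clean verification is wanted.
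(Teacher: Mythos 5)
Your proposal is correct and follows essentially the same route as the paper: reduce to the conjugacy criterion of \Cref{theorem_CC}, observe that the reflections of $D_{2n}$ (resp.\ the order-$4$ elements $a^ib$ of $Q_{4m}$) contribute nothing to $n_{\mathbb{R}}-n_{\mathbb{Q}}$, and count how many $\mathbb{Q}$-classes of rotations split, which comes down to the divisors $d$ of $n$ (resp.\ of $2m$) with $\phi(d)=4$, ruling out $n=10$ exactly as the paper does (the paper merely asserts the quaternion case is analogous, whereas you carry it out). One small bookkeeping slip: your translation of the quaternion \cut condition into ``$2m\mid 4$ or $2m\mid 12$'' is wrong, since $\phi(12)=4$ means $Q_{24}$ is \ecut but not \cut; because you filed $2m=12$ under \cut and $2m=8$ under \ecut-not-\cut, the two misclassifications cancel and the final list $m\in\{2,3,4,6\}$ is still correct.
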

		 \begin{proof}The dihedral group $D_{2n}$ is \ecut if and only if $n_\mathbb{R}-n_\mathbb{Q}\leq 1$, where $n_\mathbb{R}$ is the number of distinct $\mathbb{R}$-classes in $G$ and $n_\mathbb{Q}$ is the number of distinct $\mathbb{Q}$-classes in $G$. Note that the elements of the form $a^ib\in D_{2n}$, $1\leq i \leq n$ have order $2$  and hence their $\mathbb{R}$-class coincides with $\mathbb{Q}$-class. We thus only consider the elements of order $a^\alpha$, where $1< \alpha <n$. The $\mathbb{R}$-class of $a^\alpha$ is $\{a^{\alpha}, a^{-\alpha}\}$, which is also its conjugacy class and the $\mathbb{Q}$-class of $a^\alpha$ equals $\{a^{\alpha i}~|~\gcd(i,|a^\alpha|)=1\}$. Therefore, $\mathbb{Q}$-class and the $\mathbb{R}$-class coincide if and only if $\phi(|a^\alpha|)=2$, where $\phi$ denotes the Euler totient function. In other words, $D_{2n}$ is \cut if and only if $n\in \{2,3,4,6\}$. \\

		 	 
		 	  Further, if $D_{2n}$ is an \ecut group which is not \cut, then  $\phi(|a^\alpha|)=4$ for precisely one choice of $\alpha$, so that for that $\alpha$, the $\mathbb{Q}$-class of $a^\alpha$ has exactly $4$ elements and it can be written as disjoint union of precisely two $\mathbb{R}$-classes. Hence, $|a^\alpha|\in \{5,8,10,12\}$ for exactly one choice of $\alpha$. However, if $|a^\alpha|=10$, then $|a^{2\alpha}|=5$ which implies that there are two such elements. Thus $|a^\alpha|\neq 10$ and hence $n\in \{5,8,12\}$. Conversely, it can be directly verified that if $n\in \{5,8,12\}$, then $D_{2n}$ is an \ecut group. Hence, (i) is proved. The proof for $Q_{4m}$ is analogous and we omit the details. 
		 	
 \end{proof}

	\subsection{Strongly monomial \ecut groups}
		Jespers et al. \cite{JOdRG13} gave a formula to compute the  rank $\rho(G)$ of $\mathcal{Z}(U(\mathbb{Z}G))$ for a large class of groups which include abelian-by-supersolvable groups. In order to present the same, we need to recall strongly monomial groups.
		
		\begin{definition}[Strongly monomial groups]
	
		Let $G$ be a finite group. 	For $K\unlhd H\leq G$, set  $$\epsilon(H,K)=\begin{cases}
			\hat{H}, ~\mathrm{if}~ H=K,\\
			\Pi (\hat{K}-\hat{L}), ~\mathrm{otherwise},
		\end{cases}$$ where $\hat{H}=\frac{1}{|H|}\sum_{h\in H}h$ and $L$ runs 
over the minimal normal subgroups of $H$, which contain $K$ properly. A strong Shoda pair \cite{OdRS04} of $G$ is a pair $(H, K)$ of subgroups of $G$ satisfying:
		\begin{enumerate}
			\item[(i)] $K\unlhd H\unlhd N_G(K)$, where $N_G(K)$ is the normalizer of $K$ in $G$;
			\item[(ii)] $H/K$ is cyclic and a maximal abelian subgroup of $N_G(K)/K$; and
			\item[(iii)] the distinct $G$-conjugates of $\epsilon(H,K)$ are mutually orthogonal.
		\end{enumerate}
	
	If $H$ is a normal subgroup of $G$, then (iii) follows, and $(H,K)$ is called as an extremely stong Shoda pair of $G$. 
			
		It is known that if $(H, K)$ is a strong Shoda pair of $G$, then $e(G, H, K)$, the sum of all distinct $G$-conjugates of $\epsilon(H,K)$, is a primitive central idempotent of the rational group algebra $\mathbb{Q}G$ (\cite{JdR16}, Proposition 3.3). Two strong Shoda pairs $(H_1, K_1)$ and $(H_2, K_2)$ are said to be equivalent, if $e(G, H_1, K_1) = e(G, H_2, K_2)$. A complete set of inequivalent strong Shoda pairs of $G$ is denoted by $\mathcal{S}(G)$. A finite group $G$ is said to be strongly monomial, if every primitive central idempotent of $\mathbb{Q}G$ is of the form $e(G, H, K)$ for some strong Shoda pair $(H, K)\in \mathcal{S}(G)$. 	\end{definition}

		A rank formula for strongly monomial groups is stated below:
		
		\begin{theorem}[\cite{JOdRG13}, Theorem 3.1]\label{rank_strongly_monomial}
			
			Let $G$ be a finite strongly monomial group. Then $$\rho(G)=\sum_{(H,K)\in\mathcal{S}(G)} \left(\frac{\phi([H:K])}{k_{(H,K)}[N_G(K):H]}-1\right),$$
			
			where $H/K =\langle hK\rangle $ and
			$k_{(H,K)}=\begin{cases}
				1, ~\mathrm{if} ~hh^x\in K~\mathrm{for~ some}~x\in N_{G}(K),\\
				2, ~\mathrm{otherwise}.
			\end{cases} $
			
		\end{theorem}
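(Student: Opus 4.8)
The plan is to compute $\rho(G)$ by running through the Wedderburn components of $\QQ G$ one at a time, exactly as in the proof of Theorem~\ref{theorem_SC_ecut}, and to feed in the known description of the simple components attached to strong Shoda pairs. First I would reduce to a sum over simple components. Writing $\QQ G\cong\bigoplus_i A_i$ for the Wedderburn decomposition, with each $A_i$ simple and $F_i:=\mathcal{Z}(A_i)$ a number field, we have $\mathcal{Z}(\QQ G)\cong\bigoplus_i F_i$; since $\mathcal{Z}(\U(\ZZ G))=\U(\mathcal{Z}(\ZZ G))$ and $\mathcal{Z}(\ZZ G)$ is an order in the commutative algebra $\mathcal{Z}(\QQ G)$, it is commensurable with the maximal order $\bigoplus_i\mathcal{O}_{F_i}$, so $\mathcal{Z}(\U(\ZZ G))$ has the same rank as $\bigoplus_i\mathcal{O}_{F_i}^{\times}$, and by Dirichlet's unit theorem
$$\rho(G)=\sum_i\bigl(r_1(F_i)+r_2(F_i)-1\bigr),$$
where $r_1(F_i)$ and $r_2(F_i)$ count the real embeddings and the pairs of complex embeddings of $F_i$. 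It then remains to enumerate the $A_i$ and to find the signature of each $F_i$.

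Next I would use strong monomiality: the primitive central idempotents of $\QQ G$ are exactly the $e(G,H,K)$ with $(H,K)\in\mathcal{S}(G)$, so the $A_i$ are the algebras $A_{(H,K)}:=\QQ G\,e(G,H,K)$. Here I would invoke the structure theorem for simple components arising from strong Shoda pairs (Olivieri--del~R\'{i}o--Sim\'{o}n \cite{OdRS04}; see also \cite{JdR16}): with $N:=N_G(K)$, $d:=[H:K]$ and $H/K=\langle hK\rangle$, conjugation makes $N/K$ act on the cyclic group $H/K$, and the maximality of $H/K$ as an abelian subgroup of $N/K$ forces its centralizer in $N/K$ to equal $H/K$, giving a \emph{faithful} embedding $N/H\hookrightarrow\Aut(H/K)\cong(\ZZ/d\ZZ)^{\times}\cong\Gal(\QQ(\zeta_d)/\QQ)$. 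With respect to this action $A_{(H,K)}$ is a full matrix algebra over the crossed product $\QQ(\zeta_d)*N/H$, which, the action being faithful, is a central simple algebra over the fixed field $F_{(H,K)}=\QQ(\zeta_d)^{N/H}$; in particular $[F_{(H,K)}:\QQ]=\phi(d)/[N:H]$.

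Finally I would read off the signature of $F_{(H,K)}$. Since $\QQ(\zeta_d)/\QQ$ is abelian, complex conjugation — the automorphism $\zeta_d\mapsto\zeta_d^{-1}$, i.e.\ $-1\in(\ZZ/d\ZZ)^{\times}$ — does not depend on the chosen embedding of $\QQ(\zeta_d)$ into $\mathbb{C}$; hence $F_{(H,K)}=\QQ(\zeta_d)^{N/H}$ is totally real precisely when this element lies in the image of $N/H$ (equivalently, fixes $F_{(H,K)}$), and is totally complex otherwise. But $-1$ lies in the image of $N/H$ exactly when there is $x\in N$ with $h^{x}K=h^{-1}K$, i.e.\ $hh^{x}\in K$ — precisely the condition $k_{(H,K)}=1$. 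So if $k_{(H,K)}=1$ then $r_1=[F_{(H,K)}:\QQ]=\phi(d)/[N:H]$ and $r_2=0$, contributing $\phi(d)/[N:H]-1$; if $k_{(H,K)}=2$ then $r_1=0$ and $r_2=\tfrac12[F_{(H,K)}:\QQ]=\phi(d)/(2[N:H])$, contributing $\phi(d)/(2[N:H])-1$. Either way $(H,K)$ contributes $\phi([H:K])/(k_{(H,K)}[N_G(K):H])-1$; summing over $\mathcal{S}(G)$ gives the formula (the degenerate case $d\le 2$ being harmless, since then $\QQ(\zeta_d)=\QQ$ forces $N=H$, $k_{(H,K)}=1$ and contribution $\phi(d)-1=0$). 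The main obstacle is the second step — the Olivieri--del~R\'{i}o--Sim\'{o}n identification of $A_{(H,K)}$, and within it the faithfulness of $N/H\hookrightarrow\Gal(\QQ(\zeta_d)/\QQ)$ together with the fact that the centre of the crossed product is precisely $\QQ(\zeta_d)^{N/H}$; granting those, the other two steps are just Dirichlet's unit theorem and the elementary observation that a subfield of a cyclotomic field is totally real or totally complex.
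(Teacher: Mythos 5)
This is a quoted result (\cite{JOdRG13}, Theorem 3.1) for which the paper supplies no proof of its own, so there is nothing internal to compare against; your argument is the standard one from that reference and is consistent with how the paper itself handles $\rho(G)$ in the proof of Theorem~\ref{theorem_SC_ecut} (commensurability with the maximal order plus Dirichlet). The reconstruction is correct: the reduction to signatures of the centres, the Olivieri--del~R\'{i}o--Sim\'{o}n identification of $\QQ Ge(G,H,K)$ as a matrix algebra over a crossed product with centre $\QQ(\zeta_{[H:K]})^{N_G(K)/H}$, and the observation that this abelian field is totally real exactly when $-1$ lies in the image of $N_G(K)/H$, i.e.\ when $k_{(H,K)}=1$, together give precisely the stated formula.
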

Note that a generalization of strongly monomial groups, termed as generalized strongly monomial groups, is given in \cite{BK19-2}. This class of groups has been well studied in a series of papers \cite{BK19-2,BK22-2,Kau23} and is a substantial class of monomial groups. In \cite{BK22}, a precise formula to compute the rank of $\mathcal{Z}(\mathcal{U}(\mathbb{Z}G))$ when $G$ is a generalized strongly monomial group has been provided, which is a generalization of  \Cref{rank_strongly_monomial}. We frequently make use of the fact that if $|G|$ is odd, then $k_{(h,k)}=2$, as observed in \cite{BK22}.
			\par A group $G$ is said to be normally monomial, if every complex irreducible character of $G$ is induced from a linear character of a normal subgroup of $G$ \cite{How84c}. Hence, for a normally monomial group $G$, the set $\mathcal{S}(G)$ consists of the extremely strong Shoda pairs of $G$. Clearly, normally monomial groups are strongly monomial. An effective algorithm to compute $\mathcal{S}(G)$ for a given normally monomial group $G$ is given in \cite{BM14} (see also \cite{BM16}).
	
		Since all metabelian groups are normally monomial, we provide examples of certain families of metabelian \ecut groups, by computing $\rho(G)$ using above results.
		
		\begin{example}\begin{description}
				\item[(Cyclic groups)] Let $C_n=\langle a \rangle$ be a cyclic group of order $n$, where $n\geq 2$. Then, 
				
			\small{	$$\mathcal{S}(C_n)={\underset{d|n}{\bigcup}}\{(\langle a \rangle, \langle a^d\rangle)\}~\mathrm{and }~ \rho(C_{n})=\lfloor\frac{n}{2} \rfloor +1,$$} where $\lfloor x \rfloor$ denotes the least integer greaater than or equal to $x$.\\
			
				\item[(Dihedral groups)] 
				
				Let $D_{2n}=\langle a,b : a^n=1,b^2=1, a^b=a^{-1}\rangle $ be a dihedral group of order $2n$, where $n\geq 3$. Then,
				
			\small{$$\mathcal{S}(D_{2n})=\begin{cases}
					\{(D_{2n},D_{2n}),(D_{2n},\langle a\rangle)\} {\underset{\substack{d|n\\d\neq 1}}{\bigcup}}\{(\langle a \rangle,\langle a^d\rangle)\},~ \mathrm{if}~ 2\nmid n;\\
					\{(D_{2n},D_{2n}),(D_{2n},\langle a^2,ab\rangle),(D_{2n},\langle a^2,b\rangle),(D_{2n},\langle a\rangle)\}{\underset{\substack{d|n\\d\neq 1,2}}{\bigcup}}\{(\langle a \rangle,\langle a^d\rangle)\}, \mathrm{if} ~2\mid n.
				\end{cases}$$}

				\noindent We observe that the only strong Shoda pairs that contribute to $\rho(D_{2n})$ are $(\langle a \rangle,\langle a^d\rangle), $ where $d|n, d\neq 1,2$ and hence, $\rho(D_{2n})=\lfloor\frac{n}{2} \rfloor +1$.\\
				\item[(Quaternion groups)] Likewise, if we consider generalized quaternion group $Q_{4m}$ of order $4m$, $m\geq 2$ presented by 
				$\langle a,b ~|~a^{2m}=1, b^{2}=a^m, a^b=a^{-1}\rangle,$   then
				
				\small{$$\mathcal{S}(Q_{4m})=\begin{cases}
					\{(Q_{4m},Q_{4m}),(Q_{4m},\langle a\rangle),(Q_{4m},\langle a^2\rangle)\}	{\underset{\substack{d|2m\\d\neq 1,2}}{\bigcup}}\{(\langle a \rangle,\langle a^d\rangle)\},\, \mathrm{if}~ 2\nmid m;\\
					\{(Q_{4m},Q_{4m}),(Q_{4m},\langle a^2,ab\rangle),(Q_{4m},\langle a^2,b\rangle),(Q_{4m},\langle a\rangle),(Q_{4m},\langle a^2\rangle)\}	{\underset{\substack{d|2m\\d\neq 1,2}}{\bigcup}}\{(\langle a \rangle,\langle a^d\rangle)\},\, \mathrm{if} ~2\mid m.
				\end{cases}$$}

				As in the case of dihedral groups, the only strong Shoda pairs that contribute to $\rho(Q_{4m})$ are $(\langle a \rangle,\langle a^d\rangle),~\mathrm{where}~ d|2m, d\neq 1,2$ which yields that $\rho(Q_{4m})=m+1-\tau(2m),$ where $\tau(x)$ denotes the number of positive divisors of $x$.
				
			\end{description}			
		\end{example}
	
By above example, it follows that for $n\in \mathbb{N}$, if $G$ is either $C_n$ or $D_{2n}$ or $Q_{2n}$, with $2|n$, then$$\rho(G)=\lfloor\frac{n}{2} \rfloor +1-\tau(n).$$ Consequently, $\rho(G)\leq 1$, if and only if  $$\tau(n)\geq \lfloor\frac{n}{2} \rfloor$$ which happens precisely when $n\in \{1,2,3,4,5,6,8,12\}$, as also obtained in \Cref{cyclic_ecut} and \Cref{dihedral_quaternion_ecut}.
	
	\begin{example} Let $p$ be a prime and let $G$ be a group of order $p^n$, where $n\in \mathbb{N}$. If $G$ is abelian, then \Cref{Theorem_abelian_ecut} is applicable to determine if $G$ is \ecut or not. The rank of non-abelian groups of order $p^n$, where $n=3$ or $4$, have been computed using \Cref{rank_strongly_monomial} in (\cite{BM15}, Section 4). We observe that, amongst these, the groups $G$ for which $\rho(G)\leq 1$ are  precisely the following:
		
		\begin{itemize}
			\item[(a)] all non-abelian groups of order $2^3,~3^3$ and $2^4$.
			\item[(b)]  three groups of order $3^4$, with presentations as listed below:
			\end{itemize}
	
		\small{
		\begin{itemize}
			   \item $\langle a, b, c: a^{9} =b^{3} =c^{3}=1, ca=a^{4}c, ba=ab, cb=bc \rangle;$
			
			\item $\langle a, b, c: a^{9} =b^{3} =c^{3}=1, ba=a^{4}b, ca=abc, cb=bc \rangle;$ and
		
			\item $\langle a, b, c, d: a^{3} =b^{3} =c^{3}=d^{3}=1, dc=acd, bd=db, ad=da, bc=cb,ac=ca,ab=ba \rangle;$
		\end{itemize}}

Amongst these, the groups for which $\rho(G)= 1$ (i.e., \ecut but not \cut) are only two groups of order $16$, with presentations given by

\begin{itemize}
	\item $\langle a, b : a^{8} =b^{2} =1, ba=a^{7}b  \rangle$; and 
	\item $\langle a, b : a^{8} =b^{4} =1, ba=a^{7}b, a^{4}=b^{2}  \rangle,$
\end{itemize}
which are respectively, the dihedral and the quaternion groups of order $16$ each.\\

Clearly, these results are in accordance with the ones obtained in \Cref{theorem_classification_pgroups}.\\

It may be noted that any group of order $p^5$, where $p$ is a prime, is metabelian and hence normally monomial. Therefore, these methods are applicable to check whether a given group of order $p^5$ is \ecut or not.
	\end{example}

	\begin{example}
	Let $G=C_{(p^n,q^m)}$ be a  metacyclic group of the form $ C_{q^m}\rtimes C_{p^n}$, where $p$ and $q$ are distinct  primes, $n,m\in\mathbb{N}$ and the cyclic group $C_{p^n}$ 
	acts faithfully on the cyclic group $C_{q^m}$, so that the presentation of $G$ is given by $$G:=\langle a,b ~|~a^{q^m}=b^{p^n}=1, a^b=a^r\rangle,$$ where $m,l,r\in \mathbb{N}$ are such that the multiplicative order of $r$ modulo $q^m$ equals $p^n$.\\
	
	The set $\mathcal{S}(G)$ of strong Shoda pairs of $G$, as computed in \cite{JOdRG13} is given by  
	
	$$\mathcal{S}(G):=\{(G,G)\}\bigcup_{j_1=1}^{m} \{(\langle a\rangle, \langle a^{q^{j_1}}\rangle)\}\bigcup_{j_2=1}^{n}\{(G,\langle a,b^{p^{j_2}} \rangle)\}.$$
	
Hence,	\begin{equation}\label{Ranks}
		\rho(C_{(p^n,q^m)}) = \begin{cases} 
			2^{n-1} + \frac{q^m-1}{2^n} -n-m, ~\mathrm{if} ~p=2, \\
			\frac{p^n-1}{2} + \frac{q^m-1}{2p^n} -n-m, ~\text{otherwise.} 
		\end{cases}
	\end{equation}

	\end{example}

Consequently, we have the following:

\begin{enumerate}
	\item[(i)] $C_{(p^n,q^m)}$ is \cut if and only if \small{$(p^n, q^m)\in \lbrace (2, 3), (3,7)\rbrace.$}
	\item[(ii)]  $C_{(p^n,q^m)}$ is \ecut if and only if \small{$(p^n, q^m)\in \lbrace (2, 3), (2,5), (4, 5), (3,7), (3,13), (5,11)\rbrace.$}\\
\end{enumerate}

 We now proceed to classify metacyclic \ecut groups.
		\subsection{Metacyclic \ecut groups}

	In this subsection, we give a complete classification of split metacyclic \ecut groups.
		Though computations of rank for this class of groups is observed in \cite{FS08}, it cannot be effectively used to list all such \ecut groups. 
		
		
		

		
		
		\begin{theorem}
			Let $G$ be a split metacyclic group. Then, $G$ is an \ecut group if an only if it is isomorphic to  $$G_{n,t,l,r}:=\langle a,b: a^n=1,b^t=a^l, a^b=a^r\rangle,$$ for the choices of parameters $n,t,l,r$ as listed in tables \ref{tab:ecut} and \ref{tab:cut}.
		\end{theorem}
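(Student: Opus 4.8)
The plan is to compute the rank $\rho(G)$ for $G=G_{n,t,l,r}$ in closed form and then to pin down exactly when $\rho(G)\leq 1$. The main structural input is that a split metacyclic group is metabelian, hence normally monomial and in particular strongly monomial, so the rank formula of \Cref{rank_strongly_monomial} applies. The first step is to record a complete irredundant set $\mathcal{S}(G)$ of strong Shoda pairs of $G$. Since $G$ is metacyclic with cyclic normal subgroup $\langle a\rangle$ whose every characteristic subgroup $\langle a^{d}\rangle$ ($d\mid n$) is again normal in $G$, these pairs can be listed explicitly following \cite{JOdRG13} and \cite{FS08}: there are the ``faithful'' pairs $(\langle a\rangle,\langle a^{d}\rangle)$ for the divisors $d$ satisfying the relevant maximal abelian condition, together with pairs supported on the proper metacyclic quotients of $G$. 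In every case the normalizer $N_{G}(K)$, the indices $[H:K]$ and $[N_{G}(K):H]$, and the constant $k_{(H,K)}\in\{1,2\}$ (which equals $2$ automatically when $|G|$ is odd) can be expressed through $n$, $t$, $l$ and the multiplicative orders of $r$ modulo the divisors of $n$.

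Substituting this list into \Cref{rank_strongly_monomial} yields a formula for $\rho(G)$ purely in terms of $n,t,l,r$, in a form better suited to enumeration than the expression recorded in \cite{FS08}. The decisive observation is that in
$$\rho(G)=\sum_{(H,K)\in\mathcal{S}(G)}\left(\frac{\phi([H:K])}{k_{(H,K)}\,[N_{G}(K):H]}-1\right)$$
each summand is a \emph{nonnegative integer} --- it is the free rank of the unit group of the ring of integers of the centre of the corresponding simple component of $\mathbb{Q}G$, so nonnegativity follows from Dirichlet's unit theorem exactly as in the proof of \Cref{theorem_SC_ecut}. Hence $\rho(G)\leq 1$ forces every summand but at most one to vanish, and the exceptional one --- if present --- to equal $1$; equivalently, $\phi([H:K])\leq 2\,k_{(H,K)}\,[N_{G}(K):H]$ for every $(H,K)\in\mathcal{S}(G)$, with equality to $k_{(H,K)}\,[N_{G}(K):H]$ for all pairs but at most one, for which the left-hand side is twice the right.

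The heart of the proof --- and the step I expect to be the main obstacle --- is turning this system of simultaneous inequalities over all of $\mathcal{S}(G)$ into a finite list of admissible tuples. A careful arithmetic analysis, tracking the multiplicative orders of $r$ against the totients $\phi(d)$ over all divisors $d\mid n$ at once and exploiting that these orders divide $t$ while $\phi$ outgrows the divisor count, forces $n$ to lie in a small explicit set; thereafter the pairs supported on quotients of $G$ bound $t$, the consistency relations $l(r-1)\equiv 0\pmod n$ and $r^{t}\equiv 1\pmod n$ forced by $b^{t}=a^{l}$ restrict $l$, and only finitely many actions $r$ remain modulo the resulting small $n$. This divisor bookkeeping is precisely the interaction left opaque by the rank computation in \cite{FS08}, and it reduces the classification to a finite search.

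Finally, for each surviving tuple $(n,t,l,r)$ I would evaluate $\rho(G_{n,t,l,r})$ from the closed formula --- cross-checked, if desired, against the conjugacy-class criterion of \Cref{theorem_CC} together with the identity $\rho(G)=n_{\mathbb{R}}-n_{\mathbb{Q}}$ of \eqref{rankbydiffofcc} --- discard tuples presenting isomorphic groups while verifying that no isomorphism identifications have been missed, and sort the survivors: those with $\rho(G)=0$ populate \Cref{tab:cut} and those with $\rho(G)=1$ populate \Cref{tab:ecut}. Conversely, a direct check confirms that every entry of the two tables meets the required bound, so together with the finiteness established above this shows the two tables are exhaustive, completing the classification.
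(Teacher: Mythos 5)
Your overall strategy---reduce to the strongly monomial rank formula, use Dirichlet's unit theorem to force almost all summands to vanish, and finish with a finite check---is the same skeleton as the paper's argument, and your observation that each summand in Theorem \ref{rank_strongly_monomial} is the (nonnegative, integral) unit rank of the ring of integers of the centre of the corresponding simple component is correct and is exactly the right use of Dirichlet. However, there is a genuine gap: the step you yourself flag as ``the main obstacle''---converting the system of inequalities over all of $\mathcal{S}(G)$ into a finite set of admissible $n$---is only described, not carried out. As written, ``a careful arithmetic analysis\ldots forces $n$ to lie in a small explicit set'' is the entire content of the theorem's hard direction, and nothing in the proposal actually bounds $n$. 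Your plan also presupposes a complete explicit list of $\mathcal{S}(G)$ for an arbitrary split metacyclic group with the full closed rank formula, which is considerably more than is needed and is itself nontrivial bookkeeping that you do not supply.

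The paper gets finiteness much more cheaply, from just two pieces of data rather than the whole of $\mathcal{S}(G)$. First, $G/\langle a\rangle\cong C_t$ is a quotient of an \ecut group, hence \ecut, so $t\in\{2,3,4,5,6,8,12\}$ by Corollary \ref{cyclic_ecut} (quotients inherit the \ecut property since the simple components of $\mathbb{Q}(G/N)$ occur among those of $\mathbb{Q}G$, cf.\ Theorem \ref{theorem_SC_ecut}). Second, one verifies that the single pair $(\langle a,b^{o}\rangle,\langle b^{o}\rangle)$, with $o$ the multiplicative order of $r$ modulo $n$, is an extremely strong Shoda pair whose simple component is $M_o(\mathbb{F})$ with $[\mathbb{F}:\mathbb{Q}]=\phi(n)/o$; Dirichlet then forces $\phi(n)/o\le 4$, and since $o\mid t$ this bounds $\phi(n)$ and hence $n$. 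With $n$, $t$ bounded and $l$, $r$ constrained by the presentation, the remaining enumeration is a finite GAP computation (via \texttt{RankOfCentralUnits}), not an evaluation of a hand-derived closed formula. If you want to salvage your version, you should replace the vague ``arithmetic analysis'' by an argument of this kind: isolate one concrete Shoda pair whose field degree grows with $\phi(n)$ and let it do the bounding for you.
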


	\begin{table}[H]
		\centering
		\begin{tabular}{|c|c|c|c|c|}\hline
			\textbf{n} & \textbf{l} & \textbf{r} & \textbf{t}& \textbf{GAP ID} \\\hline
			3	& 3	& 2	& 2	& [6,1]	\\\hline
			4	& 4	& 3	& 2	& [8,3]	\\\hline
			4	& 2	& 3	& 2	& [8,4]	\\\hline
			3	& 3	& 2	& 4	& [12,1]	\\\hline
			6	& 6	& 5	& 2	& [12,4]	\\\hline
			4	& 4	& 3	& 4	& [16,4]	\\\hline
			4	& 2	& 3	& 4	& [16,6]	\\\hline
			8	& 4	& 3	& 2	& [16,8]	\\\hline
			3	& 3	& 2	& 6	& [18,3]	\\\hline
			5	& 5	& 2	& 4	& [20,3]	\\\hline
			7	& 7	& 2	& 3	& [21,1]	\\\hline
			12	& 6	& 5	& 2	& [24,5]	\\\hline
			6	& 6	& 5	& 4	& [24,7]	\\\hline
			4	& 4	& 3	& 6	& [24,10]	\\\hline
			4	& 2	& 3	& 6	& [24,11]	\\\hline
			9	& 3	& 4	& 3	& [27,4]	\\\hline
			8	& 4	& 5	& 4	& [32,4]	\\\hline
			8	& 8	& 3	& 4	& [32,13]	\\\hline
			6	& 6	& 5	& 6	& [36,12]	\\\hline
			10	& 10	& 3	& 4	& [40,12]	\\\hline
			7	& 7	& 3	& 6	& [42,1]	\\\hline
			7	& 7	& 2	& 6	& [42,2]	\\\hline
			12	& 12	& 5	& 4	& [48,11]	\\\hline
			9	& 9	& 2	& 6	& [54,6]	\\\hline
			9	& 3	& 4	& 6	& [54,11]	\\\hline
			15	& 15	& 2	& 4	& [60,7]	\\\hline
			16	& 4	& 5	& 4	& [64,28]	\\\hline
			16	& 8	& 3	& 4	& [64,46]	\\\hline
			12	& 12	& 7	& 6	& [72,37]	\\\hline
			12	& 6	& 7	& 6	& [72,38]	\\\hline
			20	& 10	& 3	& 4	& [80,29]	\\\hline
			20	& 20	& 13	& 4	& [80,30]	\\\hline
			20	& 20	& 3	& 4	& [80,31]	\\\hline
			14	& 14	& 3	& 6	& [84,7]	\\\hline
			14	& 14	& 9	& 6	& [84,9]	\\\hline
			18	& 18	& 5	& 6	& [108,26]	\\\hline
			18	& 6	& 7	& 6	& [108,31]	\\\hline
			30	& 30	& 17	& 4	& [120,41]	\\\hline
			21	& 21	& 10	& 6	& [126,7]	\\\hline
			21	& 21	& 2	& 6	& [126,8]	\\\hline
			28	& 28	& 11	& 6	& [168,20]	\\\hline
			28	& 14	& 11	& 6	& [168,21]	\\\hline
			36	& 12	& 7	& 6	& [216,78]	\\\hline
			36	& 6	& 7	& 6	& [216,81]	\\\hline
			42	& 42	& 19	& 6	& [252,28]	\\\hline
			42	& 42	& 11	& 6	& [252,29]	\\\hline
		\end{tabular}
		\caption{\textbf{Finite split metacyclic groups of rank $0$}.}
		\label{tab:cut}
	\end{table}

	\begin{table}[H]
	\centering
	\begin{tabular}{|ccccc|}\hline
		\textbf{n} & \textbf{l} & \textbf{r} & \textbf{t}& \textbf{GAP ID}\\\hline
		5  & 5 & 4 & 2 & [10,1]	 \\\hline
		8	& 8	& 7	& 2	& [16,7]	 \\\hline
		8	& 4	& 7	& 2	& [16,9]	 \\\hline
		3	& 3	& 2	& 8	& [24,1]	 \\\hline
		12	& 6	& 11	& 2	& [24,4]	 \\\hline
		12	& 12	& 11	& 2	& [24,6]	 \\\hline
		8	& 4	& 3	& 4	& [32,15]	 \\\hline
		3	& 3	& 2	& 12	& [36,6]	 \\\hline
		13	& 13	& 3	& 3	& [39,1]	 \\\hline
		5	& 5	& 2	& 8	& [40,3]	 \\\hline
		24	& 6	& 5	& 2	& [48,5]	 \\\hline
		12	& 6	& 11	& 4	& [48,10]	 \\\hline
		8	& 4	& 3	& 6	& [48,26]	 \\\hline
		11	& 11	& 3	& 5	& [55,1]	 \\\hline
		5	& 5	& 2	& 12	& [60,6]	 \\\hline
		21	& 21	& 4	& 3	& [63,3]	 \\\hline
		13	& 13	& 4	& 6	& [78,1]	 \\\hline
		7	& 7	& 3	& 12	& [84,1]	 \\\hline
		9	& 9	& 2	& 12	& [108,9]	 \\\hline
		15	& 15	& 2	& 8	& [120,7]	 \\\hline
		21	& 21	& 5	& 6	& [126,9]	 \\\hline
		13	& 13	& 2	& 12	& [156,7]	 \\\hline
		40	& 20	& 13	& 4	& [160,67]	 \\\hline
		40	& 40	& 3	& 4	& [160,68]	 \\\hline
		28	& 14	& 3	& 6	& [168,7]	 \\\hline
		28	& 28	& 3	& 6	& [168,9]	 \\\hline
		60	& 60	& 17	& 4	& [240,120]	 \\\hline
		35	& 35	& 3	& 12	& [420,15]	 \\\hline
		39	& 39	& 2	& 12	& [468,30]	 \\\hline
	\end{tabular}
	\caption{\textbf{Finite split metacyclic groups of rank 1.}}
	\label{tab:ecut}
\end{table}

	In Tables \ref{tab:cut} and  \ref{tab:ecut} , the first four columns list the identifying parameters $n,t,l, r$ of the group and the last column lists the identifier from library of small groups in GAP \cite{GAP4}.
		\begin{proof}
			Let $G$ be a split metacyclic group. Then $G$ has presentation $G_{n,t,l,r}$, for suitable choice of parameters $n,t,l,r$, i.e., $n\in \mathbb{N}$, $\gcd(r,n)=1$, $lr\equiv l (\mathrm{mod}~ n)$ and multiplicative order of $r$ modulo $n$ divides $t$. Without loss of generality, we may assume $l|n$. 
			
			If $G$ is an \ecut group, then so is $G/\langle a\rangle\cong C_t$ and thus $t\in \{2,3,4,5,6,8,12\}$. Denote the multiplicative order of $r$ modulo $n$ by $o$. We first verify that $(\langle a, b^o\rangle, \langle b^o\rangle)$ is an extremely strong Shoda pair of $G$, i.e.,
			
			\begin{itemize}
				\item[(i)] $\langle b^o\rangle\unlhd \langle a, b^o\rangle\unlhd G$.\\
				
				\item[(ii)] $\frac{\GEN{ a, b^o}}{\GEN{b^o}} \text{  is cyclic and  a maximal abelian subgroup of }  \frac{N_G(\langle  b^o\rangle)}{\GEN{b^o}}.$
			\end{itemize} 
			
			Observe that $a^{b^o} = a^{r^o} = a$, which implies $\GEN{b^o}\unlhd \GEN{a, b^o}$ and 
			$\GEN{a, b^o} \unlhd G$. For (ii), clearly $\frac{\GEN{ a, b^o}}{\GEN{b^o}} \text{  is cyclic.}$ For verifying maximality, consider a subgroup $\GEN{ a, b^d}$, with minimal $d$, such that $$\frac{\GEN{ a, b^o}}{\GEN{b^o}} < \frac{\GEN{ a, b^d}}{\GEN{b^o}} \le   \frac{N_G(\langle  b^o\rangle)}{\GEN{b^o}}=\frac{\GEN{a,b}}{\GEN{b^o}}$$

		Then, $a^{b^d} = a^{r^d} \neq 1$, as $d< o$. But then $\GEN{ a, b^d}/\GEN{b^o}$ is not abelian. Hence, (ii) follows.  By (\cite{OdRS04}, Proposition 3.4), the simple component corresponding to $(\langle a, b^o\rangle, \langle b^o\rangle)$ is $M_o(\mathbb{F})$, with $[\mathbb{F}:\mathbb{Q}]=\frac{\phi(n)}{o}$, where $\phi$ is the Euler totient function. By Dirichlet's unit theorem, if $\rho(G)\leq 1$, then $\frac{\phi(n)}{o}\in \{1,2,3,4\}$. Since $o|t$ and $t\in \{2,3,4,5,6,8,12\}$, we have that $\phi(n)\in \{2,3,4,5,6,7,8,10,12,16,24\}$. Rather, $\phi(n)\in \{2,3,4,5,6,7,8,10,12,16,24\},$ as $\phi(n)$ is even. Consequently, we have finite choices of $n$. Direct computations then yield the corresponding choices of $n$ as following:\\
		
			$\begin{array}{c l}
			\phi(n)&n \\ 
			2 & \{3,4,6\}  \\ 
			4 &\{ 5,8,10,12\} \\
			6 & \{7,14,9,18\} \\
			8 & \{16,24,15,30, 20\} \\
			10 & \{11,22\} \\
			12 & \{13,26,21,42,28,36\} \\
			16 & \{17,34,32,48,60,40\} \\
			18 & \{19,27,38,54\} \\
			24 &\{ 35,56,70,84,45,72,90,39,52,78\}\\
		\end{array}$
	
	\vspace{0.5cm}
	A systematic check done on above finite restricted parameters, for $G$ to be an \ecut group is done using GAP function \texttt{RankOfCentralUnits} (\cite{BBM20}, Section 3) to obtain Tables 1 and 2.
	\end{proof}

	\subsection{An \ecut group of order $1000$}
	\begin{example}	In \cite{OdRS04}, it has been pointed out that $G= \texttt{SmallGroup(1000,86)}$, as listed in GAP library of small groups, is the smallest monomial group which is not strongly monomial. In \cite{BK22}, it has been verified to be a generalized strongly monomial group and consequently, its rank $\rho(G)$ is computed to be $1$, i.e., $G$ is an \ecut group. 
	\end{example}

	\section{Solvable e-cut groups}
	
 Let $G$ be a finite solvable group and let $\pi(G)$ denote the set of primes dividing the order of $G$. Gow proved that if $G$ is rational, then $\pi(G)\subseteq\{2,3,5\}$  \cite{Gow}. In recent years, it has been proved that if $G$ is \cut, then $\pi(G)\subseteq\{2,3,5,7\}$ (\cite{Mah18}, \cite{Bac19}), which was deduced from the work of Chillag and Dolfi  \cite{CD10} who proved that if $G$ is a ﬁnite semi-rational solvable group, then $\pi(G)\subseteq\{2,3,5,7,13,17\}$.  We now try to analyse the prime spectrum of a solvable \ecut group.
 
 	Firstly, observe that for a group $G$, being semi-rational or quadratic rational, does not imply that $G$ is \ecut. For instance, the examples of groups given in \cite{Ten12}, namely
 $$G_1 = \langle a, b, c~|~a^2 = b^2 = c^8 = 1,~[b, c] = 1,~b^a = bc^4, c^a = c^3 \rangle ,$$ and 
 $$G_2 = \langle a, b, c~|~a^2 = b^2 = c^8 = 1, [b, c] = 1, b^a =b, c^a = bc^3 \rangle,$$ are both \ecut groups of order 32. However, $G_1$ is semi-rational but not quadratic rational and $G_2$ is quadratic rational but it is not semi-rational.

 Let $S$ be the set of all inverse semi-rational elements in $G$. If $G=S$, i.e., $G$ is \cut, then $\pi(G)\subseteq\{2,3,5,7\}.$ Otherwise, $G\setminus S=C_g^\mathbb{Q}$ for some $g\in G$. If $g$ is real, i.e., $g\sim g^{-1}$, then $g$ is semi-rational and consequently, $\pi(G)\subseteq\{2,3,5,7,13,17\}$.  
 Rather, if there exists a prime $p$, such that $ ~p\not\in \{2,3,5,7,13,17\}$ and $p||G|$, then there exist non real elements in $G$, and $G$ has exactly one irreducible character $\chi$ for which
 $\mathbb{Q}(\chi)$ is degree $4$ extension of $\mathbb{Q}$, with two pairs of conjugate embeddings. Hence, $\pi(G)\subseteq \{2,3,5,7,13,17,p_G\}$, where $p_G$ is a prime dividing $|G|$. It follows from \cite{F86} that $11\leq p_G \leq 267.$

	\section*{Acknowledgement(s)}

	The second author acknowledges the research support of the Department of Science and Technology (INSPIRE Faculty No. DST/INSPIRE/04/2023/001200), Govt. of India. The third author acknowledges the support by Science \& Engineering Research Board (SERB), Department of Science and Technology (DST), India (SRG/2023/000180).

\providecommand{\bysame}{\leavevmode\hbox to3em{\hrulefill}\thinspace}
\providecommand{\MR}{\relax\ifhmode\unskip\space\fi MR }
\providecommand{\MRhref}[2]{%
	\href{http://www.ams.org/mathscinet-getitem?mr=#1}{#2}
}
\providecommand{\href}[2]{#2}

\end{document}